\renewcommand{\bar}{\overline}
\renewcommand{\hat}{\widehat}
\renewcommand{\tilde}{\widetilde}
\newtheorem{thm}{Theorem}[section]
\newtheorem{lem}[thm]{Lemma}
\theoremstyle{definition}
\newtheorem{defn}{Definition}[section]
\newcommand{\scr}[1]{\mathscr #1}
\definecolor{wco}{rgb}{0.5,0.2,0.3}
\numberwithin{equation}{section} \theoremstyle{remark}
\newtheorem{rem}{Remark}[section]
\newcommand{\ua}{\uparrow}
\title{{\bf Asymptotic Log-Harnack Inequality and Applications for Stochastic Systems of Infinite Memory}\thanks{Supported in
 part by  NNSFC (11771326, 11431014,11831014) and a Co-Fund grant.}
}
\author{
{\bf  Jianhai Bao$^{b),c)}$,  Feng-Yu Wang$^{a),c)}$, Chenggui Yuan$^{c)}$}\\
\footnotesize{$^{a)}$Center of Applied Mathematics, Tianjin
University, Tianjin 300072, China}\\
\footnotesize{$^{b)}$School of Mathematics and Statistics, Central
South
University, Changsha 410083, China}\\
\footnotesize{$^{c)}$Department of Mathematics, Swansea University,
Singleton Park, SA2 8PP, UK}\\ \footnotesize{jianhai.bao@swansea.ac.uk,
wangfy@tju.edu.cn, C.Yuan@swansea.ac.uk}}
\date{}
\begin{document}
\def\R{\mathbb R}  \def\ff{\frac} \def\ss{\sqrt} \def\B{\mathbf
B}
\def\N{\mathbb N} \def\kk{\kappa} \def\m{{\bf m}}
\def\dd{\delta} \def\DD{\Delta} \def\vv{\varepsilon} \def\rr{\rho}
\def\<{\langle} \def\>{\rangle} \def\GG{\Gamma} \def\gg{\gamma}
  \def\nn{\nabla} \def\pp{\partial} \def\EE{\scr E}
\def\d{\text{\rm{d}}} \def\bb{\beta} \def\aa{\alpha} \def\D{\scr D}
  \def\si{\sigma} \def\ess{\text{\rm{ess}}}
\def\beg{\begin} \def\beq{\begin{equation}}  \def\F{\scr F}
\def\Ric{\text{\rm{Ric}}} \def\Hess{\text{\rm{Hess}}}
\def\e{\text{\rm{e}}} \def\ua{\underline a} \def\OO{\Omega}  \def\oo{\omega}
 \def\tt{\tilde} \def\Ric{\text{\rm{Ric}}}
\def\cut{\text{\rm{cut}}} \def\P{\mathbb P} \def\ifn{I_n(f^{\bigotimes n})}
\def\C{\scr C}      \def\aaa{\mathbf{r}}     \def\r{r}
\def\gap{\text{\rm{gap}}} \def\prr{\pi_{{\bf m},\varrho}}  \def\r{\mathbf r}
\def\Z{\mathbb Z} \def\vrr{\varrho} \def\ll{\lambda}
\def\L{\scr L}\def\Tt{\tt} \def\TT{\tt}\def\II{\mathbb I}
\def\i{{\rm in}}\def\Sect{{\rm Sect}}\def\E{\mathbb E} \def\H{\mathbb H}
\def\M{\scr M}\def\Q{\mathbb Q} \def\texto{\text{o}} \def\LL{\Lambda}
\def\Rank{{\rm Rank}} \def\B{\scr B} \def\i{{\rm i}} \def\HR{\hat{\R}^d}
\def\to{\rightarrow}\def\l{\ell}
\def\8{\infty}\def\X{\mathbb{X}}\def\3{\triangle}
\def\V{\mathbb{V}}\def\M{\mathbb{M}}\def\W{\mathbb{W}}\def\Y{\mathbb{Y}}\def\1{\lesssim}
\def\Lip{{\rm Lip}}

\def\La{\Lambda}\def\S{\mathbf{S}}

\renewcommand{\bar}{\overline}
\renewcommand{\hat}{\widehat}
\renewcommand{\tilde}{\widetilde}
 \maketitle

\begin{abstract}
The asymptotic log-Harnack inequality is established  for several
kinds of models on stochastic differential systems with infinite
memory: non-degenerate SDEs, neutral SDEs, semi-linear SPDEs, and
stochastic Hamiltonian systems.
  As applications,      the following properties are derived for the associated segment Markov semigroups:    asymptotic heat
kernel estimate, uniqueness of the invariant probability measure,
asymptotic gradient estimate (hence,
  asymptotically strong Feller property), as well as  asymptotic irreducibility. 

\end{abstract}
\noindent
 {\bf AMS Subject Classification:}\  60H10, 47G20   \\
\noindent
 {\bf Keywords:} Asymptotic log-Harnack inequality; asymptotic gradient
 estimate;   asymptotic heat kernel; asymptotic irreducibility
 \vskip 2cm

\section{Introduction}

The dimension-free Harnack inequality  was initiated  in
\cite{W97} for elliptic diffusion semigroups on Riemannian
manifolds. In case such kind of inequality is unavailable, the
log-Harnack inequality was introduced alternatively in \cite{W10}.
Both inequalities have been investigated extensively and applied to
(singular, degenerate) SDEs/SPDEs via coupling by change of measures
developed in e.g. \cite{ATW06, W07}; see \cite{Wbook} and references
within for more details. In particular, these inequalities imply
gradient estimates (hence, the strong Feller property), the
uniqueness of invariant probability measures, heat kernel estimates,
and irreducibility  of the associated Markov semigroups.

However, when the stochastic system is  highly degenerate so that
these properties are unavailable, the above type Harnack
inequalities no longer hold. In this scenario, it is natural to
investigate weaker versions of these properties by exploiting
 Harnack inequalities in the weak version. For instance, the strong
Feller property is invalid for degenerate stochastic $2D$
Navier-Stokes equations, whereas the weaker $``$asymptotically
strong Feller"  property has been proved in \cite{HM} and \cite{Xu}
by making use of asymptotic couplings and ``modified log-Harnack
inequality", respectively. Since the log-Harnack inequality in the
weak version is concerned with long time behavior, below we shall
call it
  ``asymptotic log-Harnack inequality".

In this paper, we aim to investigate asymptotic log-Harnack
inequality and its applications for SDEs with infinite memory, i.e.,
the coefficients of   SDEs involved depend on the whole history of
the system. In this setup, the strong Feller property is invalid (see e.g. \cite{BS, HMS}), so we are in the weak
situation  without log-Harnack inequalities. When the memory is
finite and the noise is path-independent, the dimension-free Harnack
inequality, log-Harnack inequality and gradient estimates have been
investigated in \cite{BWY,BWYb, EVS, SWY, WY}, to name a few.

Before considering  specific models, in Section 2 we  present some
applications of the asymptotic log-Harnack inequality in a general
framework, which are new except the asymptotically strong Feller
property   derived in \cite{Xu}. In Sections 3-6, we establish
 asymptotic log-Harnack inequality for the following
  stochastic differential systems with infinite memory, respectively,
  including
non-degenerate SDEs, neutral SDEs, semi-linear SPDEs, and stochastic
Hamiltonian systems. In the Appendix section, we address the
existence and uniqueness of solutions to SDEs with infinite memory
under the locally weak monotone condition and the weak coercive
condition.


\section{Applications of  asymptotic log-Harnack inequality}
Before we recall the definition on asymptotically strong Feller
introduced in \cite{HM} for  a Markov semigroup
 $P_t$, we start with some   notation and notions.
Let $(E, \rr)$ be a metric space,   $\B_b(E)$  the class of bounded
measurable functions on $E$, and   $\B^+_b(E)$   the set of positive
functions in
 $\B_b(E).$
A continuous function $d: E\times E\to\R_+:=[0,\8)$ is called a
pseudo-metric if $d(x,x)=0$ and $d(x,y)\le d(x,z)+d(z,y)$ hold  for
$x,y,z\in E$. For a  pseudo-metric $d$,   the transportation cost
(which also is called $L^1$-Wasserstein distance when $d$ is a
distance) is defined by
$$W_1^d(\mu_1,\mu_2)= \inf_{\pi\in \scr C(\mu_1,\mu_2)} \int_{E\times E} d(x,y)\pi(\d x,\d y),\ \ \mu_1,\mu_2\in \scr P(E),$$
where $\scr P(E)$ stands for the class of probability measures on
$E$, and $\scr C(\mu_1,\mu_2)$ consists of all couplings of $\mu_1$
and $\mu_2$, that is, $\pi\in \scr C(\mu_1,\mu_2)$ means  $\pi\in
\scr P(E\times E)$ with $\pi(\cdot\times E)=\mu_1$ and
$\pi(E\times\cdot)=\mu_2.$ An increasing  sequence of pseudo-metrics
$(d_n)_{n\ge1}$ (i.e., $d_i(\cdot,\cdot)\ge d_j(\cdot,\cdot), i\ge
j$) is said to be   a totally separating  system if
$\lim_{n\to\8}d_n(x,y)=1$ for all $x\neq y$.

\begin{defn}\label{ASF} The Markov semigroup $P_t$ is called asymptotically strong Feller at a point
$x\in E,$ if there exist  a totally separating system of
pseudo-metrics $(d_k)_{k\ge 1}$  and a sequence $t_k\uparrow\infty$
such that
\begin{equation}\label{eq5}
\inf_{U\in\mathcal {U}_x}\limsup_{k\rightarrow\8}\sup_{y\in
U} W_1^{d_k}( P_{t_k}(x,\cdot), {P}_{t_k}(y,\cdot))=0,
\end{equation}
where $\mathcal {U}_x$ denotes the collection of all open sets
containing $x$, and $P_t(x, A):= P_t1_A(x)$ for $x\in E$ and a
measurable set $A\subset E$. $P_t$  is called asymptotically strong
Feller if it is asymptotically strong Feller at any $x\in E$.
\end{defn}

For a function $f:E\to\R$, define
$$|\nn f|(x)=\limsup_{y\to x} \ff{|f(x)-f(y)|}{\rr(x,y)},\ \ x\in E.$$
Let $\|\cdot\|_\infty$ be the uniform norm. So, $\|\nn f\|_\infty =
\sup_{x\in E} |\nn f|(x).$
 Set  $\Lip(E):=\{f: E\to \R, \|\nn f\|_\infty<\infty\}$,   the family of all Lipschitzian functions on
 $E$. Next, we introduce the asymptotic log-Harnack inequality.

 \beg{defn} The following inequality is called an asymptotic log-Harnack inequality of $P_t$:
\beq\label{ALH}  P_t \log f(x)\le \log P_t f(y) +   \Phi(x,y) +
\Psi_t(x,y)\|\nn\log f\|_\infty,\ \ t>0
\end{equation} for any $f\in \B_b^+(E)$ with $\|\nn\log f\|_\infty<\8$, where $\Phi, \Psi_t: E\times E\to (0,\infty)$ are measurable with  $\Psi_t\downarrow 0$ as $t\uparrow \infty$. \end{defn}

Below, we present some asymptotic properties implied by \eqref{ALH}.
For a  measurable set $A\subset E$ and $x\in E$, let
$\rr(x,A)=\inf_{y\in A} \rr(x,y)$, i.e.,  the distance between $x$
and $A$. Moreover, for any $\vv>0$, let  $A_\vv=\{y\in
E:\rr(y,A)<\vv\}$ and $A_\vv^c$ be the complement of $A_\vv.$

\beg{thm}\label{T2.1} Let $P_t$ satisfy $\eqref{ALH}$ for some
  symmetric functions $\Phi, \Psi_t: E\times E\to \R_+$
with   $\Psi_t\downarrow 0$ as $t\uparrow \infty$. Then:
\beg{enumerate}
\item[$({\bf1})$]{\bf (Gradient estimate)} If,  for any $x\in E$,
\begin{equation}\label{c7}
\Lambda(x):=\limsup_{y\to
x}\ff{\Phi(x,y)}{\rr(x,y)^2}<\infty,  ~~\mbox{ and
}~~\Gamma_t(x)  :=\limsup_{y\to
x}\ff{\Psi_t(x,y)}{\rr(x,y)}<\infty,
\end{equation} then, for any $t>0$ and $ f\in
\Lip_b(E):=\Lip(E)\cap\B_b(E)$, \beq\label{GES}|\nn  {P}_tf|
\le\ss{2\LL}\ss{ {P}_tf^2 -( {P}_tf)^2}+\|\nn
f\|_\8\Gamma_t.\end{equation} In particular, when
$\Gamma_t\downarrow 0$ as $t\uparrow \infty$, $P_t$ is asymptotically
strong Feller.

 \item[$({\bf2})$]{\bf (Asymptotic heat kernel estimate)}  If $P_t$ has an invariant probability measure $\mu$, then, for any $f\in\B^+_b(E)$ with $\|\nn f\|_\8<\8$,
\begin{equation}\label{03}
\limsup_{t\to\8}P_tf(x)\le
\log\bigg(\ff{\mu(\e^f)}{\int_E\e^{-\Phi(x,y)}\mu(\d
y)}\bigg),~~~~x\in E.
\end{equation} Consequently, for any closed set $A\subset E$ with $\mu(A)=0,$
\begin{equation}\label{05}
\lim_{t\to\8}P_t1_A(x)=0,~~~~x\in E.
\end{equation}
\item[$({\bf3})$]{\bf (Uniqueness of invariant probability)} $P _t$ has at most one   invariant probability measure.
\item[$({\bf4})$]{\bf (Asymptotic irreducibility)} Let $x\in E$ and  $A\subset E$ be a measurable set such that
$$\dd(x,A):= \liminf\limits_{t\to\infty} P_t(x,A)>0.$$   Then,
\beq\label{AI} \liminf_{t\to\infty} P_t(y,A_\vv)>0,\ \ y\in E,\,
\vv>0.\end{equation} Moreover, for any $\vv_0\in (0,\dd(x,A)),$
there exists a constant $t_0>0$ such that \beq\label{AI2}
P_t(y,A_\vv)>0\ \text{ provided }\  t\ge t_0,~~
\Psi_t(x,y)<\vv\vv_0.\end{equation}
 \end{enumerate}\end{thm}

According to the proof of   \cite[Theorem 1.4.1(4)]{Wbook}, if
\eqref{03} holds without  limit but for a fixed $t>0$, then $P_t$
has a density $p_t(x,y)$ with respect to $\mu$ satisfying the
entropy estimate
$$\int_E p_t(x,y)\log p_t(x,y)\mu(\d y) \le -\log \int_E \e^{-\Phi(x,y)}\mu(\d y).$$
So,   \eqref{03} can be regarded as    the asymptotic heat kernel
estimate of $P_t$.

\begin{proof}[Proof of Theorem \ref{T2.1}] ({\bf1}) In terms of \cite{Xu}, if $\Gamma_t\downarrow 0$ as $t\uparrow \infty$, \eqref{GES} implies the asymptotically strong Feller property. So, it suffices to prove the gradient estimate \eqref{GES}.

For any $x\in E, t>0$ and $f\in \Lip_b(E)$,   we take $x_n\to x$
such that $\vv_n:=\rr(x_n,x)\downarrow 0$ and  (in case the limit
below is negative, write $-f$ instead of $f$) \beq\label{GG0} |\nn
P_tf|(x) = \limsup_{n\to\infty} \ff{P_t f(x_n)-P_t
f(x)}{\vv_n}.\end{equation} For any constant $c>0$,
  \eqref{ALH} implies
\begin{equation}\label{c4}
P_t\log (1+c\,\vv_n f)(x_n)\le \log {P}_t(1+c\,\vv_n
f)(x)+\Phi(x_n,x)+\Psi_t(x_n,x)\|\nn \log (1+c\vv_n f)\|_\8.
\end{equation}
By Taylor's expansion, for $\vv_n$ sufficiently small  we have
 \begin{equation}\label{c2}
P_t\log (1+c\,\vv_n f)(x_n)=c\,\vv_n {P}_tf(x_n)-\ff{c^2\vv^2_n}{2}P_tf^2(x_n)+o(\vv^2_n),
\end{equation}
and
\begin{equation}\label{c3}
 \log P_t(1+c\,\vv_n f)(x)=c\,\vv_n {P}_tf(x)-\ff{c^2\vv^2_n}{2}(P_tf)^2(x)+o(\vv^2_n).
\end{equation}
Substituting \eqref{c2} and \eqref{c3} into \eqref{c4} yields
\begin{equation*}
\begin{split}
&c\,\vv_n (P_tf(x_n)-P_tf(x))\le
\ff{c^2\vv^2_n}{2}\Big\{{P}_tf^2(x_n)-(P_tf)^2(x)\Big\}\\
&\quad+\Phi(x_n,x)+\Psi_t(x_n,x)\|\nn \log (1+c\,\vv_n
f)\|_\8+o(\vv^2_n)\\
&\le\ff{c^2\vv^2_n}{2}\Big\{P_tf^2(x_n)-({P}_tf)^2(x)\Big\}+\Phi(x_n,x)+c\,\vv_n\Psi_t(x_n,x)\|\nn
f\|_\8+o(\vv^2_n).
\end{split}
\end{equation*} Combining this with \eqref{GG0}, we obtain
\begin{equation*}
\begin{split}
|\nn P_t f|(x)
&\le\limsup_{n\to\infty}\bigg(\ff{c}{2}\Big\{{P}_tf^2(x_n)-({P}_tf)^2(x)\Big\}+\ff{\Phi(x_n,x)}{c\,\vv_n^2}+\ff{\Psi_t(x_n,x)}{\vv_n}\|\nn
f\|_\8\bigg)\\
&\le \ff c 2 \Big\{\limsup_{n\to\infty} P_t f^2(x_n)
-(P_tf)^2(x)\Big\} + \ff {\LL(x)}c + \|\nn
f\|_\infty\GG_t(x).\end{split}\end{equation*} This, in particular,
implies $P_t \Lip_b(E)\subset C_b(E)$, so that $\limsup_{n\to\infty}
P_t f^2(x_n) = P_tf^2(x)$. Consequently,
\begin{equation*}
 |\nn P_t f|(x)\le\ff{c}{2}\Big\{P_tf^2(x)-({P}_tf)^2(x)\Big\}+\ff{\Lambda(x)}{c}+\|\nn
f\|_\8\Gamma_t(x),\ \ c>0.
 \end{equation*} Minimizing the upper bound with respect to $c>0$, we therefore obtain \eqref{GES}.

({\bf2}) Applying \eqref{ALH} to $f=\e^g$ for $g\in\B_b(E)$ with
$\|\nn g\|_\8<\8$, we infer that
\begin{equation*}
\begin{split}
P_t  g(x)\le \log({P}_t\e^{g(y)})+\Phi(x,y)+\Psi_t(x,y)\|\nn
g\|_\8,~~~~x,y\in E.
\end{split}
\end{equation*}
Equivalently,
\begin{equation*}
\exp\Big(P_t  g(x)-\Phi(x,y)-\Psi_t(x,y)\|\nn g\|_\8\Big)\le
P_t\e^{g(y)},~~~~x,y\in E.
\end{equation*}
Integrating  with respect to $\mu(\d y)$ on both sides and
exploiting the $P_t$-invariance of $\mu$, we thus derive
\begin{equation*}
\e^{P_t g(x)}\int_E  \exp\Big(-\Phi(x,y)-\Psi_t(x,y)\|\nn g\|_\8\Big)\mu(\d y)\le\mu(\e^g),~~~~~x\in E.
\end{equation*}
Hence,
\begin{equation*}
P_t g(x)\le\log\bigg(\ff{\mu(\e^g)}{\int_E
\exp\Big(-\Phi(x,y)-\Psi_t(x,y)\|\nn g\|_\8\Big)\mu(\d
y)}\bigg),~~~~x\in E.
\end{equation*}
Whence,   \eqref{03} follows  by taking $t\rightarrow\8.$

Next, for a closed  set $A\subset E$  with $\mu(A)=0,$ let
\begin{equation*}
g_k=(1-k\rr(\cdot,A))^+,~~~k\ge1.
\end{equation*}
Then  $g_k\downarrow 1_A$ as $k\uparrow\8$. Since $g_k|_A=1$ and $
g_k\ge 0,$ we have
\begin{equation*}
m\limsup_{t\to\8}P_t1_A(x)\le
\limsup_{t\to\8}{P}_t(mg_k(x)),~~~~~~x\in E, m\ge 1.
\end{equation*}
This, together with \eqref{03},  leads to
\begin{equation}\label{04}
 \limsup_{t\to\8} {P}_t1_A(x)\le\ff{1}{m}\log\bigg(\ff{\mu(\e^{mg_k})}{\int_E  \e^{-\Phi(x,y)}\mu(\d y)}\bigg),~~~~~~x\in E, m\ge 1.
\end{equation}
Due to $\mu(A)=0$, one has
\begin{equation*}
\begin{split}
\lim_{k\to \infty} \mu(\e^{mg_k})=\mu(\e^{m1_A})&=\int_A\e^{m1_A(x)}\mu(\d
x)+\int_{A^c}\e^{m1_A(x)}\mu(\d x)\\
&=\e^m\mu(A)+\mu(A^c)=1
\end{split}
\end{equation*} so that, by taking $k\to\8$  in \eqref{04}, we arrive
at
\begin{equation*}
 \limsup_{t\to\8}{P}_t1_A(x)\le\ff{1}{m}\log\bigg(\ff{1}{\int_E  \e^{-\Phi(x,y)}\mu(\d y)}\bigg),~~~~~~x\in E.
\end{equation*}
Therefore, \eqref{05} holds true by approaching $m\to\8.$

({\bf3}) Since the class of invariant probability measures of $P_t$
is convex, and any two different extreme measures in the class are
mutually singular (see e.g. \cite[Proposition 3.2.5]{DZ}), it
suffices to show that any two invariant probability measures
$\mu,\tt\mu$ are equivalent. For any measurable  set $A\subseteq E$
with $\mu(A)=0$, we aim to prove $\tt\mu(A)=0$. Let $\tt A\subset A$
be a closed set. By the $P_t$-invariance of $\tt\mu$, \eqref{05} and
Fatou's lemma, we obtain
\begin{equation*}
\tt\mu(\tt A)=\limsup_{t\to\8}\tt\mu(P_t1_{\tt A})\le
\tt\mu\Big(\limsup_{t\to\8}P_t1_A\Big)=0.
\end{equation*}
So, one has
\begin{equation*}
\tt\mu(A)=\sup_{\tt A\subset A,\tt A \mbox{ closed}}\tt\mu(\tt A)=0.
\end{equation*}
As a consequence, we conclude that $\tt \mu$ is absolutely
continuous with respect to $\mu$. Similarly,  we can infer that
$\mu$ is absolutely continuous with respect to $\tt \mu$.

({\bf4}) Let $f(z)= \vv^{-1} (\vv-\rr(z, A))^+.$  Then we have
$f|_A=1, f|_{A_\vv^c}=0$ and $\|\nn f\|_\infty= \vv^{-1}.$ So, for
any $n\ge 1$,    \eqref{ALH} implies
\begin{equation*}\beg{split}& nP_t(x,A)\le P_t \log (\e^{nf})(x)\le \log P_t(\e^{nf})(y)+\Phi(x,y)+n \Psi_t(x,y)\|\nn f\|_\infty\\
&\le \log\big(1+\e^nP_t(y, A_\vv)\big) +\Phi(x,y)+n\,\vv^{-1}
\Psi_t(x,y).\end{split}\end{equation*} So we have
\begin{equation}\label{AI20}
 P_t(x,A)\le  \ff{1}{n} \log\big(1+\e^n P_t(y, A_\vv)\big)
+\ff{1}{n}\Phi(x,y)+\vv^{-1} \Psi_t(x,y). \end{equation} If
$\liminf_{t\to\infty} P_t(y,A_\vv)=0$, then, in \eqref{AI20}, taking
$t\to\8$ followed by letting $n\to\8$ and
 using $\Psi_t\to 0$ as
$t\to\infty$ yields
$$\dd(x,A)= \liminf_{t\to\infty}P_t(x,A) \le 0,$$
which
 contradicts to $\dd(x,A)>0$. Henceforth, \eqref{AI} holds.

Next, take $t_0>0$ such that $P_t(x,A)\ge \vv_0$ holds for all $t\ge
t_0$. So, if $t\ge t_0$ such that $P_t(y, A_\vv)=0$, then, due to
\eqref{AI20} by taking $n\to\8$, we have
$$\vv_0\le P_t(x,A)\le   \vv^{-1} \Psi_t(x,y).$$ Hence, \eqref{AI2} holds.
\end{proof}

\section{Non-degenerate SDEs of infinite memory}

Let $(\R^d,\<\cdot,\cdot\>,|\cdot|)$ be the $d$-dimensional
Euclidean space. $\C=C((-\8,0];\R^d)$ denotes the family of all
continuous functions $f:(-\8,0]\rightarrow\R^d$. For a fixed
constant $r>0$, set
\begin{equation}\label{a1}
\mathscr{C}_r:=\Big\{\xi\in
\C:\|\xi\|_r:=\sup_{-\8<\theta\le0}(\e^{r\theta}|\xi(\theta)|)<\8\Big\},
\end{equation}
which is a Polish (i.e., complete, separable, metrizable) space with
the norm $\|\cdot\|_r$. Since $r>0$ and $\theta\le 0$, the norm
$\|\cdot\|_r$ means that the influence of history is exponentially
weak with respect to the time parameter, which is a natural feature
in the real world.

Let $\mathcal
{M}_0=\mathcal {M}_0((-\8,0])$ be the set of all
probability measures on $(-\8,0].$ For $\kk>0$, set
\begin{equation*}
\mathcal {M}_\kk:=\Big\{\mu\in\mathcal
{M}_0:\mu^{(\kk)}:=\int_{-\8}^0\e^{-\kk\theta}\mu(\d\theta)<\8\Big\}.
\end{equation*}
$\R^d\otimes\R^d$ stands for the set of all $n\times n$-matrices
with real entries, which is equipped with the Hilbert-Schmidt norm
$\|\cdot\|_{ HS}$. Denote $\B_b(\C_r)$ by the family of all bounded
measurable functions $\phi:\C_r\rightarrow\R$ with the uniform norm
$\|\phi\|_\8:=\sup_{\xi\in\C_r}|\phi(\xi)|$. For  $A\in
\R^d\otimes\R^d$, let $A^{-1}$ be its inverse (if it exists) and
 $\|A\|$   its operator norm.

We  consider the following    SDE with infinite memory:
\begin{equation}\label{eq0}
\d X(t)=b(X_t)\d t+\si(X_t)\d W(t),~~~~t>0,~~~~X_0=\xi\in\C_r,
\end{equation}
where, for each fixed $t\ge0$, $X_t(\cdot)\in\C_r$  is defined
by $$X_t(\theta):=X(t+\theta),\ \ \theta\in( -\8,0],$$ which is called the segment process of $X(t)$,
$b:\C_r\rightarrow\R^d,$ $\si:\C_r\to\R^d\otimes\R^d$, and
$(W(t))_{t\ge0}$ is a  $d$-dimensional Brownian motion   on
some complete filtered probability space $(\OO,\F,(\F_t)_{t\ge0},\P)$.

To ensure   existence and uniqueness of solutions to \eqref{eq0} and
to establish the asymptotic log-Harnack inequality, we impose the
following assumptions on the coefficients $b$ and $\si$:
\begin{enumerate}
\item[({\bf H1})] $b\in C(\C_r)$ is  bounded on bounded subsets of $\C_r$, and there exists $K_1>0$ such that
\begin{equation}\label{b2}
2\<\xi(0)-\eta(0),b(\xi)-b(\eta)\>\le
K_1\|\xi-\eta\|_r^2,~~~~\xi,\eta\in\C_r;
\end{equation}

\item[({\bf H2})]  There exists $K_2>0$ such that
\begin{equation*}
\|\si(\xi)-\si(\eta)\|_{\rm HS}^2\le
K_2\|\xi-\eta\|_r^2,~~~~\xi,\eta\in\C_r;
\end{equation*}

\item[({\bf H3})]  $\|\si\|_\infty:=\sup\limits_{\xi\in\C_r}\|\si(\xi)\|<\8$, and   $\si$ is invertible with
 $\|\si^{-1}\|_\infty:=\sup\limits_{\xi\in\C_r}\|\si^{-1}(\xi)\|<\8$.
\end{enumerate}

These two assumptions guarantee that \eqref{eq0} admits a
 unique   solution $(X^\xi(t))_{t\ge 0}$ with the initial value
$X_0=\xi\in \C_r$; For see Theorem \ref{existence} below in detail.
Moreover, the segment process (or functional solution)
$(X_t^\xi)_{t\ge0}$ enjoys
 the Markov property; see e.g. \cite[Theorem 4.2]{WYM}.
So,
$$P_t f(\xi)= \E f(X_t^\xi),\ \ t\ge 0, \, \xi\in \C_r,\,  f\in \B_b(\C_r)$$ gives rise to a Markov semigroup $P_t$. Since the memory is infinite,
 $P_t$ is not strong Feller; see, for instance, \cite{BS,HMS}.

Assumption ({\bf H3}) is the usual ellipticity condition and will be used  to construct couplings by change of measures for  asymptotic log-Harnack inequalities.

\begin{thm}\label{th1} Assume
{\rm  ({\bf H1})-({\bf H3})}. For any  $r_0\in(0,r)$, there exists a constant $c>0$ such that
\begin{equation}\label{eq6}
P_t\log f(\eta)  \le\log P_t f(\xi)+c\,\|\xi-\eta\|_r^2+c\, \e^{-r_0
t}\|\nn\log f\|_\8\|\xi-\eta\|_r
\end{equation}
holds for $\xi,\eta\in\C_r$ and  $f\in  \B_b^+(\C_r)$ with
$\|\nn\log f\|_\8<\8$. Consequently, all assertions in Theorem
\ref{T2.1} hold for $E=\C_r, \rr(\xi,\eta)= \|\xi-\eta\|_r$,
$\LL=c$, $\GG_t= c\,\e^{-r_0t},$ and $\Phi(\xi,\eta)=
c\,\|\xi-\eta\|_r^2$.
\end{thm}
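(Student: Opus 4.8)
The plan is to prove the asymptotic log-Harnack inequality \eqref{eq6} by a coupling by change of measure, and then to obtain the listed consequences simply by checking the hypotheses of Theorem \ref{T2.1}. Fix $r_0\in(0,r)$, $\xi,\eta\in\C_r$, $T>0$ and $f\in\B_b^+(\C_r)$ with $\|\nn\log f\|_\8<\8$; write $\rr(\xi,\eta)=\|\xi-\eta\|_r$. I would let $(Y(t))_{t\ge0}$ be the segment solution of \eqref{eq0} with $Y_0=\eta$ on the original filtered space, so that $P_T\log f(\eta)=\E[\log f(Y_T)]$ with no change of measure, and then introduce the coupling
\beq\label{coup}
\d X(t)=\big\{b(X_t)+\lambda\big(Y(t)-X(t)\big)\big\}\,\d t+\si(X_t)\,\d W(t),\qquad X_0=\xi,
\end{equation}
with $\lambda>0$ a large constant to be fixed in terms of $K_1,K_2,r$. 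Since the extra drift $\xi\mapsto-\lambda\xi(0)$ only reinforces the one-sided bound $({\bf H1})$, Theorem \ref{existence} (applied with this time-inhomogeneous drift) yields a unique solution; put $Z(t):=X(t)-Y(t)$, so $Z_0=\xi-\eta$. As $\si$ is invertible by $({\bf H3})$, \eqref{coup} can be written $\d X(t)=b(X_t)\,\d t+\si(X_t)\big(\d W(t)-\si(X_t)^{-1}\lambda Z(t)\,\d t\big)$, so with $\hat W(t):=W(t)-\int_0^t\si(X_s)^{-1}\lambda Z(s)\,\d s$ and $R(T)$ the associated exponential, Girsanov's theorem gives a probability measure $\Q:=R(T)\P$ under which $(\hat W(t))_{t\in[0,T]}$ is a Brownian motion and $X$ solves \eqref{eq0} with $X_0=\xi$; hence, by the weak uniqueness from Theorem \ref{existence}, $\E_\Q[f(X_T)]=P_Tf(\xi)$.

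The next step is to combine this with the entropy (Young) inequality $\E_\P[g]\le\log\E_\Q[\e^g]+\mathrm{Ent}(\P\,|\,\Q)$ and the Lipschitz property of $\log f$. Since $(\C_r,\|\cdot\|_r)$ is a normed, hence length, space, $\|\nn\log f\|_\8<\8$ forces $\log f$ to be $\|\cdot\|_r$-Lipschitz with constant $\|\nn\log f\|_\8$, so $\log f(Y_T)\le\log f(X_T)+\|\nn\log f\|_\8\|Z_T\|_r$; taking $g=\log f(X_T)$ this yields
$$P_T\log f(\eta)=\E[\log f(Y_T)]\le\log P_Tf(\xi)+\mathrm{Ent}(\P\,|\,\Q)+\|\nn\log f\|_\8\,\E\|Z_T\|_r.$$
Because $\log R(T)=\int_0^T\<\si(X_s)^{-1}\lambda Z(s),\d W(s)\>-\ff12\int_0^T|\si(X_s)^{-1}\lambda Z(s)|^2\,\d s$ and the It\^o integral has zero $\P$-mean, $\mathrm{Ent}(\P\,|\,\Q)=-\E[\log R(T)]=\ff12\E\int_0^T|\si(X_s)^{-1}\lambda Z(s)|^2\,\d s\le\ff{\lambda^2}2\|\si^{-1}\|_\8^2\int_0^T\E|Z(s)|^2\,\d s$. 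Thus, after relabelling $c$, \eqref{eq6} — with $\Phi(\xi,\eta)=c\|\xi-\eta\|_r^2$ and $\Psi_T(\xi,\eta)=c\,\e^{-r_0T}\|\xi-\eta\|_r$ — reduces to the single uniform-in-$T$ decay estimate
\beq\label{Zdec}
\E\|Z_t\|_r^2\le c\,\e^{-2r_0t}\|\xi-\eta\|_r^2,\qquad t\ge0,
\end{equation}
since then $\int_0^\8\E|Z(s)|^2\,\d s\le\ff{c}{2r_0}\|\xi-\eta\|_r^2$ and $\E\|Z_T\|_r\le(\E\|Z_T\|_r^2)^{1/2}\le\ss c\,\e^{-r_0T}\|\xi-\eta\|_r$.

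To establish \eqref{Zdec} I would use that, under $\P$, $\d Z(t)=\{b(X_t)-b(Y_t)-\lambda Z(t)\}\,\d t+\{\si(X_t)-\si(Y_t)\}\,\d W(t)$, whence It\^o's formula with $({\bf H1})$ and $({\bf H2})$ gives $\d|Z(t)|^2\le\{(K_1+K_2)\|Z_t\|_r^2-2\lambda|Z(t)|^2\}\,\d t+\d M(t)$ for a local martingale $M$ with $\d\<M\>_t\le4K_2|Z(t)|^2\|Z_t\|_r^2\,\d t$. The key elementary fact, for $0\le\kk<r$, is obtained by splitting the supremum defining $\|Z_s\|_r$ at the epoch $u=s+\theta=0$:
$$\e^{2\kk s}\|Z_s\|_r^2\le\e^{-2(r-\kk)s}\|\xi-\eta\|_r^2+\sup_{0\le u\le s}\e^{2\kk u}|Z(u)|^2.$$
Taking $\kk=r_0$, multiplying the It\^o inequality by $\e^{2r_0t}$, passing to the supremum over $[0,T]$ and then to expectations, and using Burkholder--Davis--Gundy to absorb $M$, one is led to a closed Halanay-type inequality for $g(t):=\E\sup_{0\le u\le t}\e^{2r_0u}|Z(u)|^2$; taking $\lambda$ large enough that $2\lambda$ dominates $K_1+K_2$ (and the Burkholder constant) forces $g(t)\le c\|\xi-\eta\|_r^2$ uniformly in $t$, which gives \eqref{Zdec}. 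Finally, with \eqref{eq6} proved, $\Phi$ and $\Psi_t$ above are symmetric, $\Psi_t\downarrow0$, and $\LL(\xi)=\limsup_{\eta\to\xi}\Phi(\xi,\eta)\rr(\xi,\eta)^{-2}=c$, $\GG_t(\xi)=\limsup_{\eta\to\xi}\Psi_t(\xi,\eta)\rr(\xi,\eta)^{-1}=c\,\e^{-r_0t}$ are finite with $\GG_t\downarrow0$, so Theorem \ref{T2.1} applies and gives all of its conclusions for $E=\C_r$, $\rr(\xi,\eta)=\|\xi-\eta\|_r$, $\LL=c$, $\GG_t=c\,\e^{-r_0t}$, $\Phi(\xi,\eta)=c\|\xi-\eta\|_r^2$.

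The hard part will be the decay estimate \eqref{Zdec}: through $({\bf H1})$ and $({\bf H2})$ the past of the coupled path feeds back into the present via the weighted supremum norm $\|\cdot\|_r$, and because $\E[\sup(\cdot)]\ge\sup\E[\cdot]$ one cannot simply close a Gr\"onwall inequality for $t\mapsto\E|Z(t)|^2$ — the uniform-in-$T$ exponential bound must instead be extracted by a Halanay-type argument for stochastic functional equations of infinite memory, which is also where the restriction $r_0<r$ is used and where the size of $\lambda$ must be balanced against $K_1,K_2$ and the memory rate $r$. A secondary and more routine point is justifying the Girsanov step, i.e.\ that $R(T)$ is a genuine probability density; this can be handled via Novikov's criterion or a localization/approximation scheme together with a priori moment bounds on the solutions guaranteed by $({\bf H1})$--$({\bf H3})$.
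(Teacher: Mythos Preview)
Your overall architecture is exactly the paper's: coupling by change of measure, Young's (entropy) inequality to pass from $P_t\log f(\eta)$ to $\log P_tf(\xi)$, and an exponential $L^2$-decay of the difference process to bound both the entropy term and the Lipschitz remainder. The only cosmetic difference is that you perturb the process started at $\xi$ and keep the $\eta$-process original under $\P$, whereas the paper does the reverse; this lets you use the simpler drift $\lambda(Y(t)-X(t))$ in place of the paper's $\lambda\,\sigma(Y_t)\sigma^{-1}(X_t)(X(t)-Y(t))$ and compute the decay and the entropy under $\P$ rather than under $\Q$. The resulting It\^o inequality for $|Z(t)|^2$ is identical in form, so nothing of substance changes.

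There is one genuine slip in your decay sketch. Multiplying the It\^o inequality by $\e^{2r_0t}$ leaves, after your splitting of $\e^{2r_0s}\|Z_s\|_r^2$, a term $(K_1+K_2)\int_0^t g(s)\,\d s$ with $g(s)=\sup_{0\le u\le s}\e^{2r_0u}|Z(u)|^2$; this grows linearly in $t$ and no choice of $\lambda$ removes it, so the ``Halanay'' closure you describe does not give a uniform-in-$t$ bound. The fix, which is precisely what the paper does (Lemma~\ref{lem}), is to multiply by $\e^{2\lambda t}$ instead: then the drift term becomes the convolution $\int_0^t \e^{-2(\lambda-r)(t-s)}\e^{2rs}\|Z_s\|_r^2\,\d s$, whose kernel has $L^1$-mass of order $1/\lambda$, so for $\lambda$ large the right-hand side can be absorbed into the left. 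The martingale part is handled the same way (the paper invokes \cite[Lemma~2.2]{E09}; a direct BDG estimate after the same convolution manipulation also works). With this correction your outline goes through and coincides with the paper's proof.
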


To prove \eqref{eq6}, we construct coupling by change of measures
(see for example \cite{Wbook}).
 Since the memory is infinite, we cannot make the coupling successful at a fixed time,
 but can
make two marginal processes close to each other exponentially fast
when $t\to\infty$. The following construction of coupling is due to
\cite{W11}, where SDEs without memory are concerned.

We simply denote $X_t=X_t^\xi$ and $X(t)= X^\xi(t)$, the functional
solution and the solution to \eqref{eq0} with the initial value
$\xi\in \C_r$, respectively. For any $\ll>r,$ where $r>0$ is given
in \eqref{a1}, consider the following SDE:
\begin{equation}\label{eq00}
\d Y(t)=\{b(Y_t)+\ll\si(Y_t)\si^{-1}(X_t)(X(t)-Y(t))\}\d
t+\si(Y_t)\d W(t),~~~~t>0,~~~~Y_0=\eta\in\C_r.
\end{equation}
With ({\bf H1})-({\bf H3}) in hand,  we infer that ({\bf D1}) and
({\bf D2}) in  Appendix \ref{app} below hold   for
$$\tt b(\zeta):=b(\zeta)+\ll\si(\zeta)\si^{-1}(\tt \zeta)(\tt\zeta(0)-\zeta(0)),~~~\zeta\in\C_r$$
with fixed $\tt\zeta\in\C_r.$ Thus, under
 ({\bf H1})-({\bf H3}), Theorem \ref{existence} shows that \eqref{eq00} has a unique strong solution
$(Y(t))_{t\ge0}.$ Let $Y_t$ be the segment process. To examine that
$Y_t$ has the semigroup $P_t$ under a   probability measure $\Q$,
let
\begin{equation*}
h(t)=\ll\,\si^{-1}(X_t)(X(t)-Y(t)),\ \ \ \  \tt W(t)=W(t)+\int_0^th(s)\d s,
\end{equation*}
and define \beq\label{eq333} R(t)=\exp\bigg(-\int_0^t\<h(s),\d
W(s)\>-\ff{1}{2}\int_0^t|h(s)|^2\d s\bigg),\ \ t\ge 0.
\end{equation} We have the following result.

\begin{lem}\label{Gir} Assume
{\rm   ({\bf H1})-({\bf H3})}.  Then, \beq\label{UNF}
\sup_{t\in[0,T]}\E\Big(R(t)\log R(t)\Big)<\8,\ \ T>0.
\end{equation}
 Consequently, there exists a unique probability measure $\Q$ on $(\OO,\F_\infty)$ such that
\beq\label{KLM} \ff{\d \Q|_{\F_t}}{\d \P|_{\F_t}}= R(t),\ \ t\ge
0.\end{equation}   Moreover, $\tt W(t)$ is a $d$-dimensional
Brownian motion under  $\Q$.
\end{lem}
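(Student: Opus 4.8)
The plan is to establish \eqref{UNF} first, since it gives uniform integrability of $(R(t))_{t\in[0,T]}$ and hence the validity of the Girsanov transformation on each finite interval; the consistency of the family $\{\Q|_{\F_t}\}$ then produces $\Q$ on $\F_\infty$, and the Girsanov theorem identifies $\tt W$ as a $\Q$-Brownian motion. First I would record the key a priori estimate: since $R(t)$ is a nonnegative local martingale under $\P$ with $R(0)=1$, it is a supermartingale, so $\E R(t)\le 1$; to upgrade this to the entropy bound I would compute, using It\^o's formula on $R(t)\log R(t)$ (or equivalently working under the candidate measure), that
\[
\E\big(R(t)\log R(t)\big)=\ff12\,\E_\Q\!\int_0^t|h(s)|^2\,\d s
=\ff{\ll^2}{2}\,\E_\Q\!\int_0^t\big|\si^{-1}(X_s)(X(s)-Y(s))\big|^2\,\d s,
\]
provided we first know $\E R(t)=1$; to avoid circularity I would instead run the standard localization argument with stopping times $\tau_N=\inf\{t:|X(t)-Y(t)|\ge N\}$, bound $\E\big(R(t\wedge\tau_N)\log R(t\wedge\tau_N)\big)$ by $\tfrac{\ll^2\|\si^{-1}\|_\infty^2}{2}\,\E_{\Q_N}\!\int_0^{t\wedge\tau_N}|X(s)-Y(s)|^2\,\d s$ on the localized (genuine probability) measure $\Q_N$, and then pass to the limit.

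The crux is therefore an exponential-moment / second-moment estimate on the gap $Z(t):=X(t)-Y(t)$ under the relevant measure. Under $\Q$ (where $\tt W$ is Brownian) the pair satisfies $\d X(t)=b(X_t)\d t+\si(X_t)\d\tt W(t)$ while $\d Y(t)=\{b(Y_t)-\ll\si(Y_t)\si^{-1}(X_t)Z(t)\}\d t+\si(Y_t)\d\tt W(t)$, so
\[
\d Z(t)=\big\{b(X_t)-b(Y_t)-\ll Z(t)\big\}\d t+\big\{\si(X_t)-\si(Y_t)\big\}\d\tt W(t).
\]
Applying It\^o to $|Z(t)|^2$ and using ({\bf H1}) (the one-sided bound $2\<Z(t),b(X_t)-b(Y_t)\>\le K_1\|X_t-Y_t\|_r^2$), ({\bf H2}) ($\|\si(X_t)-\si(Y_t)\|_{\rm HS}^2\le K_2\|X_t-Y_t\|_r^2$), gives, after taking expectations,
\[
\ff{\d}{\d t}\,\E|Z(t)|^2\le (K_1+K_2)\,\E\|X_t-Y_t\|_r^2-2\ll\,\E|Z(t)|^2 .
\]
Here $\|X_t-Y_t\|_r^2=\sup_{\theta\le 0}\e^{2r\theta}|Z(t+\theta)|^2$ splits into the "past" part over $\theta\le -t$, controlled by $\e^{-2rt}\|\xi-\eta\|_r^2$, and the "recent" part $\sup_{0\le s\le t}\e^{-2r(t-s)}|Z(s)|^2$; the exponential weight with rate $r$ together with the choice $\ll>r$ is exactly what makes the memory term subordinate to the dissipative term $-2\ll|Z|^2$. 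A Gronwall-type argument (e.g. setting $u(t)=\sup_{s\le t}\e^{2rs}\E|Z(s)|^2$ and estimating $\d u/\d t$) yields $\E|Z(t)|^2\le C\,\e^{-2r t}\|\xi-\eta\|_r^2$ for all $t\ge 0$, hence $\E_\Q\!\int_0^T|Z(s)|^2\d s<\8$ uniformly; feeding this back into the entropy identity gives \eqref{UNF}. I would carry this estimate out directly on the localized measures $\Q_N$ (the constants are uniform in $N$) so that the conclusion $\E R(t)=1$ emerges simultaneously, and then Fatou/uniform integrability closes the localization.

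With \eqref{UNF} established, $\E R(t)=1$ for each $t$, so $\Q|_{\F_t}:=R(t)\,\d\P|_{\F_t}$ is a probability measure; the martingale property $\E[R(t)\mid\F_s]=R(s)$ makes the family consistent, and by the Kolmogorov/Ionescu–Tulcea extension (or Parthasarathy's theorem on the Polish path space) there is a unique $\Q$ on $\F_\8$ restricting to each $R(t)\,\d\P|_{\F_t}$, which is \eqref{KLM}. Finally, since $\langle\tt W\rangle_t=\langle W\rangle_t=t\,I_d$ and, by Girsanov's theorem applied on each $[0,T]$ (licensed precisely by $\E R(T)=1$), $\tt W(t)=W(t)+\int_0^t h(s)\d s$ is a continuous $\Q$-local martingale with the identity quadratic variation, L\'evy's characterization identifies $\tt W$ as a $d$-dimensional Brownian motion under $\Q$. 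The main obstacle is the uniform-in-$N$ second-moment bound on $Z$ under $\Q_N$, i.e.\ making the dissipation $-2\ll$ dominate the infinite-memory norm $\|X_t-Y_t\|_r^2$; everything else is standard Girsanov bookkeeping.
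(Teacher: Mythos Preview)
Your overall architecture is correct and matches the paper: localize, use the entropy identity $\E_{\Q_N}\log R=\tfrac12\E_{\Q_N}\!\int|h|^2$, bound $|h|^2$ via ({\bf H3}), estimate $Z=X-Y$ under the tilted measure, then pass to the limit and extend $\Q$ by consistency. Two remarks, one cosmetic and one substantive.

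First, the individual SDEs you write for $X$ and $Y$ under $\Q$ are interchanged: since $\si(X_t)h(t)=\ll Z(t)$ while $\si(Y_t)h(t)=\ll\si(Y_t)\si^{-1}(X_t)Z(t)$, under $\Q$ one has
\[
\d X(t)=\{b(X_t)-\ll Z(t)\}\d t+\si(X_t)\d\tt W(t),\qquad
\d Y(t)=b(Y_t)\d t+\si(Y_t)\d\tt W(t),
\]
i.e.\ the clean dissipative drift $-\ll Z$ sits in the $X$-equation. Your equation for $Z$ is nevertheless correct, so this is only a slip in the narrative.

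Second, and this is a genuine gap, the Gronwall step does not close as you describe it. After It\^o and taking expectation you arrive at
\[
\ff{\d}{\d t}\,\E|Z(t)|^2\le (K_1+K_2)\,\E\|Z_t\|_r^2-2\ll\,\E|Z(t)|^2,
\]
but the memory term is $\E\|Z_t\|_r^2=\E\big[\sup_{\theta\le 0}\e^{2r\theta}|Z(t+\theta)|^2\big]$, a supremum \emph{inside} the expectation. Your proposed auxiliary function $u(t)=\sup_{s\le t}\e^{2rs}\E|Z(s)|^2$ only controls $\sup\E$, and since $\sup\E\le\E\sup$, this inequality goes the wrong way to dominate the right-hand side. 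The paper handles this by applying the BDG inequality to the martingale part of $\d|Z(t)|^2$ (which is $2\<Z,(\si(X_t)-\si(Y_t))\d\tt W\>$ and is controlled via ({\bf H2})), thereby bounding $\E_{\Q}\sup_{s\le t\wedge\tau_k}|Z(s)|^2$ directly and running Gronwall on that quantity. This yields a crude $T$-dependent bound $\E_{\Q_{T,k}}\|Z_{t\wedge\tau_k}\|_r^2\le C(T,\xi,\eta)$ uniform in $k$, which is all that \eqref{UNF} requires. The exponential decay $\E|Z(t)|^2\le C\e^{-2rt}\|\xi-\eta\|_r^2$ you are aiming for is stronger than needed here; the paper postpones it to Lemma~\ref{lem}, where again BDG-type control of the pathwise supremum is the essential ingredient.
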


\begin{proof} If \eqref{UNF} holds, then $(R(t))_{t\ge 0}$ is a locally uniformly integrable martingale, and, by Girsanov's theorem, for any
$T>0$, $(\tt W(t))_{t\in [0,T]}$ is a $d$-dimensional Brownian motion under the probability $\Q_T:= R(T)\P.$
 By the martingale property of $R(t)$, the family $(\Q_T)_{T>0}$ is harmonic, so that by  Kolmogorov's harmonic theorem, there exists a unique probability measure $\Q$ on $(\OO,\F)$ such that \eqref{KLM} holds. Therefore, $(\tt W(t))_{t\ge 0}$ is a   $d$-dimensional Brownian motion under  $\Q$. So, it remains to prove \eqref{UNF}.

 For any $k>\|\xi\|_r+\|\eta\|_r$, define the stopping
time
\begin{equation*}
\tau_k=\inf\{t\ge0:\|X_t\|_r+\|Y_t\|_r\ge k\}.
\end{equation*}
Due to the non-explosion of \eqref{eq0} and \eqref{eq00} (see
\eqref{e9} below for more details), $\tau_k\uparrow\8$ as
$k\uparrow\8.$ Then, $(\tt W(t))_{t\in[0,T\wedge\tau_k]}$ is a
Brownian motion under the weighted probability measure
$\d\mathbb{Q}_{T,k}=R(T\wedge\tau_k)\d \P$. By ({\bf H3}), there
exists a constant $c_1>0$ such that
\begin{equation}\label{02}
\begin{split}
\E\Big(R(t\wedge\tau_k)\log R(t\wedge\tau_k)\Big)&=\E_{\mathbb{Q}_{T,k}}\log R(t\wedge\tau_k)
 =\ff{1}{2}\E_{\mathbb{Q}_{T,k}}\int_0^{t\wedge\tau_k}|h(s)|^2\d s\\
&\le c_1\int_0^T\E_{\mathbb{Q}_{T,k}}\|X_{t\wedge\tau_k}-Y_{t\wedge\tau_k}\|^2_r\d t,~~~~~t\in[0,T].
\end{split}
\end{equation}  Rewrite  respectively  \eqref{eq0} and \eqref{eq00} as
\beq\label{RWT} \beg{split} &\d  X(t)  = \big\{b(X_t)-\ll (X(t)-Y(t))\big\}\d t + \si(X_t)\d\tt W(t),\ \ t\le T\land\tau_k,~X_0=\xi\\
 &\d Y(t)= b(Y_t)\d t +\si(Y_t)\d\tt W(t),\ \  t\le T\land\tau_k,~Y_0=\eta.\end{split}\end{equation}
 By It\^o's formula and assumptions ({\bf H1}) and ({\bf H2}), under the probability $\Q$ we have
 $$\d |X(t)-Y(t)|^2 \le c_2\|X_t-Y_t\|_r^2 \d t+ 2\<X(t)-Y(t), (\si(X_t)-\si(Y_t))\d\tt W(t)\>, \ \
 t\le T\land \tau_k$$
 for some constant $c_2>0.$
 Therefore, applying the BDG inequality and using ({\bf H2}) once more, we can find out a constant $C(T,\xi,\eta)>0$ such that
 $$\E\|X_{t\land \tau_k}-Y_{t\land \tau_k}\|_r^2 \le C(T,\xi,\eta),\ \ t\in [0, T].$$ Plugging this into \eqref{02} leads to
 $$\sup_{k\ge 0, t\in [0,T] } \E\Big(R(t\land \tau_k)\log R(t\land\tau_k)\Big) <\infty,\ \ T>0.$$
 This implies \eqref{UNF} by Fatou's lemma.\end{proof}

Next, to deduce  asymptotic log-Harnack inequality from the
asymptotic coupling $(X_t,Y_t)$,  we show that  $\|X_t-Y_t\|_r$
decays exponentially fast as $t\to\infty$ in the $L^p$-norm sense.

\begin{lem}\label{lem} Assume
{\rm   ({\bf H1})-({\bf H3\textbf{}})}. Then, for any $p >0$ and
$r_0\in (0,r)$, there exist $\ll,c>0$ such that the above asymptotic
coupling $(X_t,Y_t)$ satisfies
\begin{equation}\label{b3}
\E_\Q\|X_t-Y_t\|_r^p\le c\,\e^{-p\,r_0t}\|\xi-\eta\|_r^p,\ \ t\ge 0.
\end{equation}
\end{lem}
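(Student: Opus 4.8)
The plan is to follow the difference process $Z(t):=X(t)-Y(t)$ under $\Q$ and to show that it contracts exponentially in every $L^p$-norm: first the pointwise value $|Z(t)|$, and then, thanks to the exponential weight built into $\|\cdot\|_r$, the functional norm $\|X_t-Y_t\|_r$ itself; the parameter $\ll>r$ in \eqref{eq00} will be taken large, and $c$ will be whatever constant comes out. It suffices to treat $p\ge2$, since for $0<p<2$ the bound \eqref{b3} then follows from the case $p=2$ by Jensen's inequality. All the It\^o computations below are first carried out up to the stopping times $\tau_k$ from the proof of Lemma \ref{Gir} and then extended via $\tau_k\uparrow\infty$ and Fatou's lemma; I suppress this localization.

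First I would apply It\^o's formula to $|Z(t)|^p$, which is $C^2$ for $p\ge2$, using the two equations in \eqref{RWT}. Assumption $({\bf H1})$ controls the drift term $p|Z(t)|^{p-2}\<Z(t),b(X_t)-b(Y_t)\>$, $({\bf H2})$ controls the It\^o correction, the coupling term produces $-p\ll|Z(t)|^p$, and a single application of Young's inequality to the resulting cross terms $|Z(t)|^{p-2}\|X_t-Y_t\|_r^2$ yields
\[
\d|Z(t)|^p\le\big\{C_1\|X_t-Y_t\|_r^p-\mu\,|Z(t)|^p\big\}\,\d t+\d N_t,
\]
where $C_1=C_1(p,K_1,K_2)$ is fixed, $\mu=\mu(p,\ll,K_1,K_2)\to\infty$ as $\ll\to\infty$, and $N_t$ is a local martingale with $\d\<N\>_t\le C_2|Z(t)|^{2(p-1)}\|X_t-Y_t\|_r^2\,\d t$. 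Multiplying by the integrating factor $\e^{\mu t}$ gives, for $0\le s\le u$,
\[
|Z(u)|^p\le\e^{-\mu(u-s)}|Z(s)|^p+C_1\int_s^u\e^{-\mu(u-v)}\|X_v-Y_v\|_r^p\,\d v+\Theta(s,u),
\]
where $\Theta(s,u):=\int_s^u\e^{-\mu(u-v)}\,\d N_v$ is an Ornstein--Uhlenbeck-type stochastic convolution; its running supremum over a bounded interval I would estimate by applying It\^o's formula to $|\Theta(s,\cdot)|^2$ together with the Burkholder--Davis--Gundy and Young inequalities. Estimating $\Theta$ this way, rather than as $\e^{-\mu u}$ times an exponential martingale, is what avoids a spurious $\mu$-dependent blow-up.

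The memory is handled through the elementary bound, valid for $0\le s\le t$,
\[
\|X_t-Y_t\|_r^p=\sup_{u\le t}\e^{pr(u-t)}|Z(u)|^p\le\e^{-pr(t-s)}\|X_s-Y_s\|_r^p+\sup_{s\le u\le t}\e^{-pr(t-u)}|Z(u)|^p .
\]
I would fix a window length $T_0>0$, multiply the bound for $|Z(u)|^p$ by $\e^{-pr(s+T_0-u)}$, and take the supremum over $u\in[s,s+T_0]$: since $\mu>pr$, the exponent $-pr(s+T_0-u)-\mu(u-s)$ is decreasing in $u$, so the first contribution is $\le\e^{-prT_0}\|X_s-Y_s\|_r^p$, the second is $\le(C_1/\mu)\sup_{s\le v\le s+T_0}\|X_v-Y_v\|_r^p$, and the third is controlled by Burkholder--Davis--Gundy. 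Combining this with $\sup_{s\le v\le s+T_0}\|X_v-Y_v\|_r^p\le\|X_s-Y_s\|_r^p+\sup_{s\le u\le s+T_0}|Z(u)|^p$ and with a parallel estimate for $\E_\Q\sup_{s\le u\le s+T_0}|Z(u)|^p$, and taking $\mu$ large enough to absorb the cross terms, one arrives at a window contraction
\[
\E_\Q\|X_{s+T_0}-Y_{s+T_0}\|_r^p\le\Big(2\,\e^{-prT_0}+\tfrac{C_3}{\mu}\Big)\E_\Q\|X_s-Y_s\|_r^p,\qquad s\ge0.
\]
Given $r_0\in(0,r)$, I would choose $T_0$ so large that $2\e^{-prT_0}<\tfrac12\e^{-pr_0T_0}$ and then $\ll$ (hence $\mu$) so large that $C_3/\mu<\tfrac12\e^{-pr_0T_0}$, making the factor $\le\e^{-pr_0T_0}$. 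Iterating over the windows $[nT_0,(n+1)T_0]$ gives $\E_\Q\|X_{nT_0}-Y_{nT_0}\|_r^p\le\e^{-pr_0nT_0}\|\xi-\eta\|_r^p$, and interpolating inside a window (applying the same estimate on $[nT_0,t]$, whose prefactor is uniformly bounded) produces \eqref{b3}.

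The heart of the difficulty is the mismatch between the dissipation and the memory: the coupling drift $-\ll Z(t)$ creates dissipation only for the current value $|Z(t)|$, whereas both the source term in the energy inequality and the target quantity $\|X_t-Y_t\|_r$ involve the entire past trajectory through a supremum. What makes the argument go through is threading the weight $\e^{-r(t-u)}$ through the energy estimate, so that for $\ll$ (hence $\mu$) far larger than $r$ the dissipation factor $\e^{-\mu(u-s)}$ and the weight $\e^{-pr(t-u)}$ combine into the genuine memory rate $\e^{-prT_0}$, and running the whole scheme over a finite window and iterating rather than attempting a single Gronwall estimate, which would only yield exponential growth.
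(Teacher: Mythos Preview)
Your overall plan—drive a pathwise energy inequality for $|Z|^p$, use the exponential weight in $\|\cdot\|_r$ to split old from new history, and iterate over windows—is a legitimate route and differs structurally from the paper's single continuous-time Gronwall. But the step you pass over as routine, the running supremum of the stochastic convolution $\Theta(s,\cdot)$, is exactly where the argument has a real gap.

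The problem is a moment mismatch. Since $\d\langle N\rangle_t\le C_2|Z(t)|^{2(p-1)}\|Z_t\|_r^2\,\d t$ scales like $\|Z\|_r^{2p}$, your device—It\^o on $\Theta^2$, drop the dissipative $-2\mu\Theta^2$, then BDG on $\int\Theta\,\d N$ and Young to absorb $\sup\Theta^2$—produces
\[
\E_\Q\sup_{u\in[s,s+T_0]}\Theta(s,u)^2\ \le\ C\,\E_\Q\int_s^{s+T_0}\d\langle N\rangle_v\ \le\ C'\,\E_\Q\int_s^{s+T_0}\|Z_v\|_r^{2p}\,\d v,
\]
an $L^{2p}$ quantity. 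Extracting $\E_\Q\sup_u\Theta(s,u)$ from this via Cauchy--Schwarz leaves $\big(\E_\Q\int\|Z_v\|_r^{2p}\big)^{1/2}$, which is bounded \emph{below}, not above, by the $L^p$ quantities $A=\E_\Q\|Z_s\|_r^p$ and $B=\E_\Q\sup_u|Z(u)|^p$ you are tracking; the contraction does not close, and bootstrapping to $2p$ regresses indefinitely. If instead you abandon the convolution and apply BDG to the raw increment $N(\cdot)-N(s)$ (where the split $\sqrt{\int|Z|^p\cdot|Z|^{p-2}\|Z\|_r^2}\le\vv\sup|Z|^p+C_\vv\int\|Z\|_r^p$ \emph{does} close in $L^p$), the drift contribution becomes $C_1T_0$ rather than $C_1/\mu$, forcing $T_0$ small—incompatible with choosing $T_0$ large so that $2\e^{-prT_0}<\tfrac12\e^{-pr_0T_0}$. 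Either way the displayed window contraction does not follow.

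The paper avoids this altogether: it applies It\^o only to $|Z(t)|^2$, weights by $\e^{2\ll t}$, and raises the resulting \emph{pathwise} inequality to the power $p/2$ (taking $p>4$). The stochastic term becomes $\sup_s\big(\e^{-\kk s}M^{(\ll)}(s)\big)^{p/2}$ for a genuine martingale $M^{(\ll)}$, and is controlled by the maximal inequality of \cite[Lemma~2.2]{E09},
\[
\E_\Q\sup_{0\le s\le t}\big(\e^{-\kk s}M^{(\ll)}(s)\big)^{p/2}\ \le\ c_0(p,\ll)\int_0^t\E_\Q\big(\e^{prs}\|Z_s\|_r^p\big)\,\d s,\qquad c_0(p,\ll)\to0\ \text{as}\ \ll\to\infty.
\]
This keeps the moment at $p$ \emph{and} carries the vanishing prefactor, so one Gronwall inequality for $t\mapsto\E_\Q(\e^{prt}\|Z_t\|_r^p)$ yields the rate $pr-c(p,\ll)>pr_0$ for $\ll$ large—no window iteration needed. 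To salvage your scheme you would need precisely such an $L^p$-scaling maximal bound for $\sup_u|\Theta(s,u)|$ with a coefficient vanishing as $\mu\to\infty$; It\^o on $\Theta^2$ does not deliver it.
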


\begin{proof} By Jensen's inequality, it suffices to prove for large $p>0$, for instance, $p>4$ as we will take below.

 Let $Z(t)=X(t)-Y(t), t\in\R$. According to Lemma \ref{Gir}, \eqref{RWT} holds for all $t\ge 0$, where $\tt W(t) $ is a $d$-dimensional Brownian motion
  under the probability measure $\Q$.
By applying It\^o's formula and using ({\bf H1}) and ({\bf H2}),
there exists     $K>0$ such that for all $\ll>r$,
\begin{equation}\label{bao}\d |Z(t)|^2 \le  \{-2\ll|Z(t)|^2+ K \|Z_t\|_r^2\} \d t +
2\<Z(t), (\si(X_t)-\si(Y_t))\d\tt W(t)\>, \ \
 t\ge 0.\end{equation} Set
 \begin{equation*}\label{LLM}    M^{(\ll)}(t):=2\,\int_0^t \e^{2\ll s}\<Z(s),(\si(X_s)-\si(Y_s))\d \tt W(s)\>,\ \ t\ge 0.\end{equation*}  Thus, we deduce
 from \eqref{bao} and the It\^o formula that 
\begin{equation}\label{SVV}
\e^{2\ll t}|Z(t)|^2\le|Z(0)|^2+K\int_0^t\e^{2\ll s}\|Z_s\|_r^2\d
s+M^{(\ll)}(t),\ \ t\ge 0.
\end{equation}
So, letting   $\kk =2(\ll-r)>0$, we obtain
\begin{equation}\label{b6}
\begin{split}
\e^{2rt}|Z(t)|^2&\le\e^{-\kk t}|Z(0)|^2+K\int_0^t\e^{-\kk(t-
s)}\e^{2rs}\|Z_s\|_r^2\d s + \e^{-\kk t} M^{(\ll)}(t),\ \ t\ge 0.
\end{split}
\end{equation}
Combining this with the fact that
\begin{equation}\label{a8}
\begin{split}
\|Z_t\|_r^2&=\sup_{-\8<\theta\le0}(\e^{2r\theta}|Z(t+\theta)|^2)
 = \e^{-2rt}\sup_{-\8<s\le t}(\e^{2r s}|Z(s)|^2)\\
&\le\e^{-2rt}\|Z_0\|_r^2+\e^{-2rt}\sup_{0\le s\le t}(\e^{2r
s}|Z(s)|^2),
\end{split}
\end{equation}
we  arrive at
\begin{equation}\label{b1}\beg{split}
\e^{2r t}\|Z_t\|_r^2&\le\|Z_0\|_r^2+\sup_{0\le s\le t}(\e^{2r
s}|Z(s)|^2)\\
&\le 2\|Z_0\|_r^2 + K \int_0^t\e^{-\kk(t- s)}\e^{2rs}\|Z_s\|_r^2\d s
+\sup_{s\in [0,t]} \Big(\e^{-\kk s} M^{(\ll)}(s)\Big),\end{split}
\end{equation}
where in the second procedure we have utilized
$t\mapsto\e^{2rt}\|Z_t\|_r^2$ is nondecreasing. By H\"older's
inequality, one finds that
\begin{equation}\label{y2}
\begin{split}
\Big(\int_0^{t\wedge\tau_k}\e^{-\kk(t\wedge\tau_k-
s)}\e^{2rs}\|Z_s\|_r^2\d s\Big)^{p/2}&\le\Big(\int_0^\8\e^{-\ff{p\kk
s}{p-2}}\d
s\Big)^{\ff{p-2}{2}}\int_0^{t\wedge\tau_k}\e^{prs}\|Z_s\|_r^p\d
s\\
&\le\Big(\ff{p-2}{p\kk}\Big)^{\ff{p-2}{2}}\int_0^{t\wedge\tau_k}\e^{prs}\|Z_s\|_r^p\d
s.
\end{split}
\end{equation}
On the other hand, taking advantage of \cite[Lemma 2.2]{E09}, we
may find out a constant $c_0(p,\ll)>0$ with
$\lim_{\ll\to\8}c_0(p,\ll)=0$ such that
\begin{equation}\label{y1}
\begin{split}
&\E_\Q\bigg(\sup_{0\le s\le
t\wedge\tau_k}\Big(\e^{-\kk s} M^{(\ll)}(s)\Big)^{p/2}\bigg)\\
&\le  c_0(p,\ll)\E_\Q\int_0^{t\wedge\tau_k}\e^{pr
s}|(\si(X_s)-\si(Y_s))^*Z(s)|^{p/2}\d s\\
&\le
K_2^{p/4}c_0(p,\ll)\E_\Q\int_0^{t\wedge\tau_k}\e^{prs}\|Z_s\|_r^p\d
s\\
&\le
K_2^{p/4}c_0(p,\ll)\int_0^t\E_\Q(\e^{pr(s\wedge\tau_k)}\|Z_{s\wedge\tau_k}\|_r^p)\d
s.
\end{split}
\end{equation}
Taking \eqref{y2} and \eqref{y1} into consideration,   we deduce
from \eqref{b1} that, for some $c(p)$, $c(p,\ll)\in (0,\infty)$ with
$c(p,\ll)\downarrow 0$ as $\ll\uparrow \infty$,
$$\E_\Q (\e^{p\,r (t\land \tau_k)}\|Z_{t\land\tau_k}\|_r^{p})\le c(p) \|Z_0\|_r^p + c(p,\ll) \int_0^t\E_\Q (\e^{p\,r(s\land \tau_k)}\|Z_{s\land\tau_k}\|_r^p)\d s,\ \ t\ge 0.$$
  By
Gronwall's lemma, it follows that
$$\E_\Q (\e^{p\,r (t\land \tau_k)}\|Z_{t\land\tau_k}\|_r^{p})\le c(p) \e^{c(p,\ll)t} \|Z_0\|_r^p,\ \ t\ge 0.$$
Letting $k\to\infty$,  we obtain from Fatou's lemma that
$$\E_\Q \|Z_{t}\|_r^{p}\le c(p) \e^{-(p\,r-c(p,\ll))t} \|Z_0\|_r^p,\ \ t\ge 0,$$
which yields the desired assertion due to   $c(p,\ll)\to 0$ as
$\ll\to\infty$.
\end{proof}

\begin{proof}[Proof of Theorem \ref{th1}] By Lemma \ref{Gir} and the weak uniqueness of   solutions to   \eqref{eq0}, we have
$$P_tf(\eta)= \E_\Q f(Y_t),\ \ t\ge 0, \, f\in \B_b(\C_r).$$
 So, for any $f\in \B_b^+(\C_r)$ with $\|\nn\log f\|_\infty<\8$,  by the definition of $\|\nn\log f\|_\infty$ and Lemma \ref{lem}, there exists a constant $C>0$ such that
\begin{equation}\label{eq9}
\begin{split}
P_t\log f(\eta)&=\E_\mathbb{Q}\log f(Y_t)
 =\E_\mathbb{Q}\log f(X_t)+\E_\mathbb{Q}(\log f(Y_t)-\log f(X_t))\\
&\le\E(R(t)\log f(X_t))+\|\nn\log
f\|_\8\E_\mathbb{Q}\|X_t-Y_t\|_r\\
&\le\E(R(t)\log R(t))+\log P_t f(\xi)+C\e^{-r_0t}\|\nn\log
f\|_\8  \|\xi-\eta\|_r,
\end{split}
\end{equation}
where in the last display we have used   the Young inequality; see
e.g. \cite[Lemma 2.4]{ATW}.

Next,  it follows from  \eqref{eq333}, \eqref{b3}, ({\bf H2}) and
({\bf H3}) that for some constants $C_1,C_2>0$, \beg{align*}
&\E(R(t)\log R(t)) =\E_\mathbb{Q}\log R(t)
 =\ff{\ll^2}{2}\E_\mathbb{Q}\int_0^t|\si^{-1}(X_t)(X(t)-Y(t))|^2\d t\\
&\le\ff{C_1\,\ll^2}{2}\int_0^t\E_\mathbb{Q}|X(s)-Y(s)|^2\d s
 \le\ff{C_1\,\ll^2}{2}\int_0^t\E_\mathbb{Q}\|X_s-Y_s\|^2_r\d s
 \le C_2\ll^2 \|\xi-\eta\|_r^2.\end{align*}
 Plugging this back into \eqref{eq9} yields \eqref{eq6}.
\end{proof}

\section{Neutral SDEs of infinite memory}

Consider the following  neutral type SDEs   with infinite memory:
\begin{equation}\label{a2}
\d \{X(t)-G(X_t)\}=b(X_t)\d t+\si(X_t)\d
W(t),~~~~t>0,~~~~X_0=\xi\in\C_r,
\end{equation}
where $b,\si$ and $W$ are stipulated as in \eqref{eq0}, and
$G:\C_r\to\R^d$, which is, in general, named as the neutral term of
\eqref{a2}. This kind of equation has been utilized to model some
evolution phenomena arising in, e.g., physics, biology and
engineering, to name a few; see  e.g.  \cite{M08}. Besides ({\bf
H2}) and ({\bf H3}) above, we further assume that
\begin{enumerate}
\item[({\bf A1})] There exists $\dd\in(0,1)$ such that
$|G(\xi)-G(\eta)|\le\dd\|\xi-\eta\|_r$ for any $\xi,\eta\in\C_r;$

\item[({\bf A2})] $b\in C(\C_r)$ is  bounded on bounded subsets of $\C_r$ and there exists an $L>0$ such that
\begin{equation*}
2\<\xi(0)-\eta(0)-(G(\xi)-G(\eta)),b(\xi)-b(\eta)\>\le
L\|\xi-\eta\|_r^2,~~~~\xi,\eta\in\C_r.
\end{equation*}
\end{enumerate}

Under assumptions ({\bf A1}), ({\bf A2}) and ({\bf H2}), \eqref{a2}
has a unique strong solution $(X^\xi(t))_{t\ge0}$  with the initial
value $\xi\in \C_r$ by following exactly the argument of Theorem
\ref{existence} below. Let $(X^\xi_t)_{t\ge0}$ be the corresponding
segment process.

\begin{thm}\label{th3} Assume
{\rm   ({\bf H2})-({\bf H3}) and ({\bf A1})-({\bf A2})}. Then all assertions in Theorem \ref{th1} hold true.
\end{thm}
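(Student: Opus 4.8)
The plan is to run the three-step argument of Section~3 almost verbatim, the only structural change being that the position variable $X(t)$ is everywhere replaced by the neutral variable $X(t)-G(X_t)$, with the strict contraction $\dd\in(0,1)$ of {\rm ({\bf A1})} used to pass back and forth between $X(t)-G(X_t)$ and $X(t)$. First I would set up the coupling: given the solution $(X(t))_{t\ge0}$ to \eqref{a2} with $X_0=\xi$ and a parameter $\ll>r$, run
$$\d\{Y(t)-G(Y_t)\}=\big\{b(Y_t)+\ll\,\si(Y_t)\si^{-1}(X_t)\big[(X(t)-G(X_t))-(Y(t)-G(Y_t))\big]\big\}\d t+\si(Y_t)\d W(t),\quad Y_0=\eta.$$
This is again a neutral SDE of the form \eqref{a2} with the same $G$ and $\si$ and modified drift $\tt b(\zeta):=b(\zeta)+\ll\,\si(\zeta)\si^{-1}(X_t)[(X(t)-G(X_t))-(\zeta(0)-G(\zeta))]$; as the added drift is dissipative in $\zeta(0)-G(\zeta)$ up to Lipschitz errors bounded via {\rm ({\bf H2})}--{\rm ({\bf H3})}, the triple $(\tt b,G,\si)$ still satisfies {\rm ({\bf A1})}--{\rm ({\bf A2})} and {\rm ({\bf H2})}, so the well-posedness argument of Theorem~\ref{existence}, adapted to the neutral setting, yields a unique strong solution $(Y(t))_{t\ge0}$ with segment process $(Y_t)_{t\ge0}$. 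Putting $\Xi(t):=(X(t)-G(X_t))-(Y(t)-G(Y_t))$, $h(t)=\ll\,\si^{-1}(X_t)\Xi(t)$, $\tt W(t)=W(t)+\int_0^th(s)\d s$, and $R(t)$ as in \eqref{eq333}, the proof of Lemma~\ref{Gir} goes through word for word, It\^o's formula on $|\Xi(t)|^2$ with {\rm ({\bf A2})}, {\rm ({\bf H2})} and the BDG inequality bounding $\E_\Q\|X_{t\wedge\tau_k}-Y_{t\wedge\tau_k}\|_r^2$ on bounded time intervals; this gives $\sup_{t\in[0,T]}\E(R(t)\log R(t))<\8$ and a probability $\Q$ with $\d\Q|_{\F_t}/\d\P|_{\F_t}=R(t)$ under which $\tt W$ is Brownian and $Y_t$ has law $P_t$.

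Next I would establish the exponential decay of $\|X_t-Y_t\|_r$. Under $\Q$ one has $\d\Xi(t)=\{b(X_t)-b(Y_t)-\ll\Xi(t)\}\d t+(\si(X_t)-\si(Y_t))\d\tt W(t)$, so by It\^o's formula, {\rm ({\bf A2})} and {\rm ({\bf H2})},
$$\d|\Xi(t)|^2\le\{-2\ll|\Xi(t)|^2+(L+K_2)\|X_t-Y_t\|_r^2\}\d t+2\<\Xi(t),(\si(X_t)-\si(Y_t))\d\tt W(t)\>.$$
The one genuinely new step, relative to Lemma~\ref{lem}, is to convert this into a bound on $Z(t):=X(t)-Y(t)$. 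From $Z(t)=\Xi(t)+(G(X_t)-G(Y_t))$ and {\rm ({\bf A1})} we get $|Z(s)|\le|\Xi(s)|+\dd\|Z_s\|_r$ for $s\ge0$, and introducing the nondecreasing function $\phi(t):=\sup_{s\le t}\e^{rs}|Z(s)|$ (so that $\|Z_t\|_r=\e^{-rt}\phi(t)$), this self-improves to $\phi(t)\le(1-\dd)^{-1}\big(\|\xi-\eta\|_r+\sup_{0\le s\le t}\e^{rs}|\Xi(s)|\big)$, while also $\|\Xi_0\|_r\le(1+\dd)\|\xi-\eta\|_r$ and $|\Xi(t)|\le\|\Xi_t\|_r\le(1+\dd)\|Z_t\|_r$. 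Feeding these inequalities into the chain \eqref{bao}--\eqref{b1}, with $\kk=2(\ll-r)$ and $M^{(\ll)}(t):=2\int_0^t\e^{2\ll s}\<\Xi(s),(\si(X_s)-\si(Y_s))\d\tt W(s)\>$, produces, for a constant $c_\dd$ depending only on $\dd,L,K_2$,
$$\e^{2rt}\|Z_t\|_r^2\le c_\dd\Big(\|\xi-\eta\|_r^2+\int_0^t\e^{-\kk(t-s)}\e^{2rs}\|Z_s\|_r^2\d s+\sup_{s\in[0,t]}\big(\e^{-\kk s}M^{(\ll)}(s)\big)\Big);$$
then, raising to the power $p/2$, taking $\E_\Q$ after localizing by $\tau_k$, estimating the drift integral by H\"older's inequality as in \eqref{y2} and the martingale supremum by \cite[Lemma 2.2]{E09} as in \eqref{y1} (both with constants tending to $0$ as $\ll\to\8$ since $\kk\to\8$), and finally Gronwall's and Fatou's lemmas, I obtain $\E_\Q\|X_t-Y_t\|_r^p\le c\,\e^{-pr_0t}\|\xi-\eta\|_r^p$ for any prescribed $p>0$, $r_0\in(0,r)$, and $\ll$ large.

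With these two lemmas in hand the asymptotic log-Harnack inequality follows exactly as in the proof of Theorem~\ref{th1}: from $P_t\log f(\eta)=\E_\Q\log f(Y_t)$, splitting $\log f(Y_t)=\log f(X_t)+(\log f(Y_t)-\log f(X_t))$, using the Young inequality $\E(R(t)\log f(X_t))\le\E(R(t)\log R(t))+\log P_tf(\xi)$, the Lipschitz bound $|\log f(Y_t)-\log f(X_t)|\le\|\nn\log f\|_\8\|X_t-Y_t\|_r$, the identity $\E(R(t)\log R(t))=\ff{\ll^2}{2}\E_\Q\int_0^t|\si^{-1}(X_s)\Xi(s)|^2\d s$ together with {\rm ({\bf H3})}, and the decay estimate, one arrives at
$$P_t\log f(\eta)\le\log P_tf(\xi)+c\,\|\xi-\eta\|_r^2+c\,\e^{-r_0t}\|\nn\log f\|_\8\|\xi-\eta\|_r,\qquad t>0,$$
for all $f\in\B_b^+(\C_r)$ with $\|\nn\log f\|_\8<\8$; Theorem~\ref{T2.1} then applies with $E=\C_r$, $\rr(\xi,\eta)=\|\xi-\eta\|_r$, $\LL=c$, $\GG_t=c\,\e^{-r_0t}$, $\Phi(\xi,\eta)=c\,\|\xi-\eta\|_r^2$, and $\Psi_t(\xi,\eta)=c\,\e^{-r_0t}\|\xi-\eta\|_r$, yielding all assertions of Theorem~\ref{th1}.

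I expect the main obstacle to be the self-improving bound on $\phi$ that trades the neutral term for the factor $(1-\dd)^{-1}$, together with the verification that the dissipation margin $2\ll$ still dominates $(L+K_2)(1-\dd)^{-2}$ after the H\"older step; this goes through because $\kk=2(\ll-r)\to\8$ while $c_\dd$ is a fixed constant, the sole structural requirement being $\dd<1$ from {\rm ({\bf A1})}. Everything else is the infinite-memory bookkeeping already carried out in Section~3.
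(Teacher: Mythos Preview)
Your proposal is correct and follows essentially the same route as the paper: the same coupling through the neutral variable $\Xi(t)=(X(t)-G(X_t))-(Y(t)-G(Y_t))$, the same Girsanov step, and the same reduction to a decay estimate for $\|Z_t\|_r$ via the contraction $\dd<1$ of {\rm ({\bf A1})}. The only cosmetic difference is in how you pass from $\Xi$ back to $Z$: the paper uses the quadratic Young inequality $|Z(t)|^2\le(1+\vv)|\Xi(t)|^2+(1+1/\vv)\dd^2\|Z_t\|_r^2$ with the optimal choice $\vv=\dd/(1-\dd)$, whereas you iterate the pointwise bound $|Z(s)|\le|\Xi(s)|+\dd\|Z_s\|_r$ through the running supremum $\phi$; both extract the same factor $(1-\dd)^{-1}$ and feed identically into \eqref{y2}--\eqref{y1}.
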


\begin{proof} As in the proof of Theorem \ref{th1}, we construct the following asymptotic coupling by change of measures.
Write $(X(t),X_t)=(X^\xi(t),X^\xi_t)$ for notation brevity and
consider the coupled neutral SDE with $Y_0=\eta$:
\begin{equation*}
\d
\{Y(t)-G(Y_t)\}=\{b(Y_t)+\ll\si(Y_t)\si^{-1}(X_t)(X(t)-Y(t)-(G(X_t)-G(Y_t))\}\d
t+\si(Y_t)\d W(t).
\end{equation*}
  For any $t\ge0$
and $\ll>r$, let
\begin{equation*}
h(t):=\ll\,\si^{-1}(X_t)\{X(t)-Y(t)-(G(X_t)-G(Y_t))\},\ \ \tt
W(t):=W(t)+\int_0^th(s)\d s.
\end{equation*}
Define
\begin{equation}\label{s1}
R(t)=\exp\Big(-\int_0^t\<h(s),\d W(s)\>-\ff{1}{2}\int_0^t|h(s)|^2\d
s\Big),~~~t\ge0.
\end{equation}
By a close inspection of argument for  Theorem \ref{th1}, it
suffices to prove \eqref{UNF} and
  Lemma \ref{lem} for the present asymptotic coupling $(X(t),Y(t))$. Below, we merely present a brief proof for the later since the former one can
   be done as that of  Lemma \ref{Gir}.  Let   the probability measure $\Q$  be given by \eqref{KLM} with $R(t)$ defined in \eqref{s1}.
Then $\tt W(t)$ is a $d$-dimensional Brownian motion under $\Q$.
Again let $Z(t)=X(t)-Y(t), t\in\R$, and $Z_t$ be the associated
segment process.  By following the
  argument   to derive \eqref{b6},     ({\bf H2})  and ({\bf A2}) imply
\begin{equation}\label{c44}
\begin{split}
 \e^{2rt}|Z(t)-(G(X_t)-G(Y_t))|^2&\le \e^{-\kk t}|Z(0)-(G(X_0)-G(Y_0))|^2\\
&\quad+K\int_0^t\e^{-\kk(t- s)}\e^{2rs}\|Z_s\|_r^2\d
s+\e^{-\kk t}M^{(\ll)}(t)
\end{split}
\end{equation} for $\kk:=2(\ll-r)>0$, some constant $K>0$ and
$$M^{(\ll)}(t):=2\int_0^t \e^{2\ll s}\<Z(s)-(G(X_s)-G(Y_s)),(\si(X_s)-\si(Y_s))\d
\tt W(s)\>.$$ Next, for any  $\vv>0,$ it follows from ({\bf A1})
that
\begin{equation*}
\begin{split}
|Z(t)|^2&\le
(1+\vv)|Z(t)-(G(X_t)-G(Y_t))|^2+(1+1/\vv)|G(X_t)-G(Y_t)|^2\\
&\le(1+\vv)|Z(t)-(G(X_t)-G(Y_t))|^2+(1+1/\vv)\dd^2\|Z_t\|^2_r.
\end{split}
\end{equation*}
This,  together with \eqref{b1}, yields
\begin{equation*}
\begin{split}
\e^{2rt}\|Z_t\|_r^2 &\le\|Z_0\|_r^2+(1+\vv) \sup_{0\le s\le
t}(\e^{2rs}|Z(s)-(G(X_s)-G(Y_s))|^2)+(1+1/\vv)\dd^2\e^{2rt}\|Z_t\|_r^2.
\end{split}
\end{equation*}
Taking $\vv=\ff{\dd}{1-\dd}$, we derive
\begin{equation*}
\begin{split}
\e^{2rt}\|Z_t\|_r^2
&\le\ff{1}{1-\dd}\Big\{\|Z_0\|_r^2+\ff{1}{1-\dd}\sup_{0\le s\le
t}(\e^{2rs}|Z(s)-(G(X_s)-G(Y_s))|^2)\Big\}.
\end{split}
\end{equation*}
Combining this  with \eqref{c44}, and noting that ({\bf A1}) implies
\begin{equation*}\label{a7}
|\xi(0)-\eta(0)-(G(\xi)-G(\eta))|^2\le4\|\xi-\eta\|^2_r,~~~~\xi,\eta\in\C_r,
\end{equation*}
we arrive at
\begin{equation}\label{a6}
\begin{split}
 \e^{prt}\|Z_t\|_r^p
&\le
c\,\Big\{\|Z_0\|_r^p+\Big(\int_0^{t\wedge\tau_k}\e^{-\kk(t\wedge\tau_k-
s)}\e^{2rs}\|Z_s\|_r^2\d s\Big)^{p/2}\\
&\quad+\sup_{0\le s\le
t\wedge\tau_k}\Big(\e^{-\kk s} M^{(\ll)}(s)\Big)^{p/2}\Big\}\\
\end{split}
\end{equation} for some constant  $c>0$. With the aid of ({\bf A1}) and ({\bf H3}), we observe that \eqref{y2} and \eqref{y1} still hold  for $p>4$. Combining this with
  $\kk\to\infty$ as $\ll\to\infty$,   we deduce
\eqref{b3} from \eqref{a6}.
\end{proof}

\begin{rem}
{\rm Indeed, the existence and uniqueness of solutions to \eqref{a2}
under the locally weak monotone condition and the weak coercive
condition can be obtained by following the argument of Theorem
\ref{app} and constructing the following Euler-Maruyama scheme
\begin{equation*}
\d \{X^n(t)-G(X_t^n)\}=b(\hat X_t^n)\d t+\si(\hat X_t^n)\d
W(t),~~~t>0,~~~X_0^n=X_0=\xi,
\end{equation*}
where, for  $t\ge0$, $ \hat X_t^n(\theta):=X^n((t+\theta)\wedge
t_n),~ t_n:= [nt]/n,\ \theta\in(-\8,0]. $ }
\end{rem}

\section{Semi-linear  SPDEs of infinite memory}

Let $(\H,\langle\cdot,\cdot\rangle,|\cdot|)$ be a real separable
Hilbert space. $\C=C((-\8,0];\H)$ denotes the family of all
continuous mappings $f:(-\8,0]\rightarrow\H$, and $\C_r$ is defined
as in \eqref{a1}. Let $\scr {L}(\H)$ and $\scr {L}_{HS}(\H)$ be the
spaces of all bounded linear operators and Hilbert-Schmidt operators
on $\H$, respectively. Denote $\|\cdot\|$ and $\|\cdot\|_{HS}$ by
the operator norm and the Hilbert-Schmidt norm, respectively.

Consider the following semi-linear SPDE on $\H$ with infinite
memory:
\begin{equation}\label{w0}
\d X(t)=\{AX(t)+b(X_t)\}\d t+\si(X_t)\d W(t),~~~t>0,~~~~X_0=\xi,
\end{equation}
where $(A,\D(A))$ is a densely defined closed operator on $\H$
generating a $C_0$-semigroup $\e^{tA}$, $b:\C_r\rightarrow \H$,
$\si:\C_r\rightarrow\scr {L}(\H)$, and $(W(t))_{t\geq0}$ is  a
cylindrical Wiener process on $\H$ for  a complete probability space
$(\Omega, \scr {F}, \mathbb{P})$ with the natural filtration $(\scr
{F}_t)_{t\geq0}$.

 We
assume that
\begin{enumerate}
\item[{\bf (B1)}] ($-A, \mathscr{D}(A))$  is   self-adjoint  with
  discrete spectrum
 $0<\ll_1\le \ll_2\le \cdots $ counting  multiplicities such that
 $\sum\limits_{i\ge 1}  \ll_i^{-\aa}<\infty$ for some $\aa\in (0,1)$;

\item[{\bf (B2)}] There exists an $L_0>0$  such that
\begin{equation*}
|b(\xi)-b(\eta)|+\|\si(\xi)-\si(\eta)\|_{HS}\le
L_0\|\xi-\eta\|_r,~~~\xi,\eta\in\C_r;
\end{equation*}

\item[{\bf (B3)}]  $\|\si\|_\8:=\sup\limits_{\xi\in\C_r}\|\si(\xi)\|<\8$, and $\si(\xi)$ is invertible with
 $\|\si^{-1}\|_\8:=\sup\limits_{\xi\in\C_r}\|\si^{-1}(\xi)\|<\8$.
\end{enumerate}
According to {\bf (B2)} and {\bf (B3)},    $\si$ need not,  but the
difference $\si(\xi)-\si(\eta)$ does, take values in the space of
Hilbert-Schmidt operators.  Recall that
 a continuous adapted process
$(X_t^\xi)_{t\ge0}$ on $\C_r$ is called a mild solution to
\eqref{w0} with the initial value $\xi\in \C_r$, if  $X^\xi_0=\xi$
and
\begin{equation*}
X^\xi(t)=\e^{tA}\xi(0)+\int_0^t\e^{(t-s)A}b(X_s^\xi)\d
s+\int_0^t\e^{(t-s)A}\si(X_s^\xi)\d W(s),~~~~t\ge 0.
\end{equation*}
In terms of  the following result, assumptions  {\bf(B1)-(B3)} imply
the existence and uniqueness of mild solutions to \eqref{w0} as well
as asymptotic log-Harnack inequality of the associated Markov
semigroup.

\begin{thm}\label{spde} Assume
   {\bf(B1)-(B3)}.  Then $\eqref{w0}$ has  a unique mild solution  $(X_t^\xi)_{t\ge0}$, and the associated Markov semigroup $P_t$ satisfies all assertions in Theorem \ref{th1}.
\end{thm}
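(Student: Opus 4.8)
The plan is to follow the template already established in the proofs of Theorem \ref{th1} and Theorem \ref{th3}: construct an asymptotic coupling by change of measures, verify that the Radon--Nikodym density has finite entropy (so Girsanov applies), show that the difference of the two segment processes decays exponentially in $L^p(\Q)$, and then assemble the asymptotic log-Harnack inequality \eqref{eq6} by the Young inequality argument in \eqref{eq9}. Existence and uniqueness of the mild solution under {\bf(B1)-(B3)} is a standard fixed-point/monotonicity argument (as referenced for the finite-memory case) and can be dispatched first, noting that {\bf(B1)} guarantees $\e^{tA}$ is a contraction $C_0$-semigroup with $\int_0^t\|\e^{sA}\|_{HS}^2\d s<\infty$ (from $\sum_i\ll_i^{-\aa}<\infty$), which is what makes the stochastic convolution well defined in $\H$.

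For the coupling, I would mimic \eqref{eq00} and set, with $Y_0=\eta$,
\begin{equation*}
\d Y(t)=\{AY(t)+b(Y_t)+\ll\,\si(Y_t)\si^{-1}(X_t)(X(t)-Y(t))\}\d t+\si(Y_t)\d W(t),
\end{equation*}
so that under the measure $\Q$ defined by $R(t)=\exp(-\int_0^t\<h(s),\d W(s)\>-\tfrac12\int_0^t|h(s)|^2\d s)$ with $h(t)=\ll\,\si^{-1}(X_t)(X(t)-Y(t))$, the process $\tt W(t)=W(t)+\int_0^t h(s)\d s$ is a cylindrical Wiener process and $Y_t$ has law $P_t(\eta,\cdot)$. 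The finite-entropy bound \eqref{UNF} follows exactly as in Lemma \ref{Gir}: by {\bf(B3)}, $|h(s)|^2\le\ll^2\|\si^{-1}\|_\infty^2|X(s)-Y(s)|^2$, and a stopping-time plus It\^o-formula argument on $|X(t)-Y(t)|^2$ (now the drift contributes $2\<X(t)-Y(t),A(X(t)-Y(t))\>\le -2\ll_1|X(t)-Y(t)|^2\le 0$, which only helps) bounds $\E_\Q\|X_t-Y_t\|_r^2$ on $[0,T]$.

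For the exponential decay I would rewrite the pair as in \eqref{RWT} with $\tt W$; writing $Z(t)=X(t)-Y(t)$, the equation for $Z$ reads $\d Z(t)=\{AZ(t)-\ll Z(t)+b(X_t)-b(Y_t)\}\d t+(\si(X_t)-\si(Y_t))\d\tt W(t)$. Applying It\^o to $|Z(t)|^2$ and using $\<Z(t),AZ(t)\>\le 0$ together with {\bf(B2)} gives $\d|Z(t)|^2\le\{-2\ll|Z(t)|^2+K\|Z_t\|_r^2\}\d t+2\<Z(t),(\si(X_t)-\si(Y_t))\d\tt W(t)\>$, which is precisely \eqref{bao}. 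From here the argument is verbatim that of Lemma \ref{lem}: multiply by $\e^{2\ll t}$, pass to $\e^{2rt}$ with $\kk=2(\ll-r)$, use the sup-over-history identity \eqref{a8}, H\"older as in \eqref{y2}, the martingale moment bound \eqref{y1} via \cite[Lemma 2.2]{E09} and {\bf(B2)} (which controls $\|\si(X_s)-\si(Y_s)\|_{HS}$), then Gronwall and Fatou, yielding $\E_\Q\|X_t-Y_t\|_r^p\le c\,\e^{-p r_0 t}\|\xi-\eta\|_r^p$ for any $r_0\in(0,r)$ after taking $\ll$ large. Finally \eqref{eq6} follows as in \eqref{eq9}: $P_t\log f(\eta)=\E_\Q\log f(Y_t)\le\E(R(t)\log R(t))+\log P_t f(\xi)+\|\nn\log f\|_\infty\E_\Q\|X_t-Y_t\|_r$, combined with the entropy bound $\E(R(t)\log R(t))\le C\ll^2\|\xi-\eta\|_r^2$ obtained from \eqref{b3} at $p=2$ and {\bf(B3)}; the conclusion of Theorem \ref{th1}, hence of Theorem \ref{T2.1}, then applies with $E=\C_r$, $\rr(\xi,\eta)=\|\xi-\eta\|_r$, $\LL=c$, $\GG_t=c\e^{-r_0 t}$, $\Phi(\xi,\eta)=c\|\xi-\eta\|_r^2$.

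The main obstacle, relative to the SDE case, is purely infinite-dimensional bookkeeping: one must ensure the stochastic convolution and the correction term $\ll\si(Y_t)\si^{-1}(X_t)Z(t)$ produce a well-defined $\H$-valued mild solution (here {\bf(B3)} makes $\si^{-1}$ bounded so the correction is as regular as the original noise), and that the It\^o formula applied to $|Z(t)|^2$ for a mild solution is justified — this is standard but requires either working with strong/Yosida approximations $A_n=nA(n-A)^{-1}$ and passing to the limit, or invoking a known It\^o formula for mild solutions; the dissipativity $\<Z,AZ\>\le -\ll_1|Z|^2\le 0$ from {\bf(B1)} is what keeps every estimate clean. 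No new analytic idea beyond Sections 3--4 is needed.
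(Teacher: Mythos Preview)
Your proposal is correct and follows essentially the same approach as the paper: the coupling equation, the Girsanov density $R(t)$, the reduction to verifying \eqref{UNF} and \eqref{b3}, and the final Young-inequality assembly all match the paper's argument verbatim, and the paper likewise dispatches the infinite-dimensional It\^o-formula issues by invoking a standard finite-dimensional (Yosida-type) approximation. The only cosmetic difference is that the paper spells out the Banach fixed-point argument for existence/uniqueness in part {\bf (a)} more explicitly while being terser on part {\bf (b)}, whereas you do the reverse.
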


\begin{proof} {\bf (a)} The   existence and uniqueness of mild solutions follows from  the Banach fixed point
theorem by a more or less standard argument under the assumptions
{\bf(B1)-(B3)}. Fix $T>0$ and let \begin{equation*}
\begin{split}\D_T=\Big\{ &(u(t))_{t\in (-\8,T]} \ \text{is\ a\
continuous\ adapted \ process\ on\ } \H
\mbox{ with } u_0=\xi  \\
&\mbox{ and } \E\Big(\sup_{t\in (-\8,T]} (\e^{rt}|u(t)|^4)\Big)
<\infty\Big\}.\end{split}\end{equation*}
 Then $\D_T$ is a complete metric space with
$$\rr(u,v):=\|u-v\|_{\D_T}:=  \Big(\E\Big(\sup_{t\in [0,T]} (\e^{r t}|u(t)-v(t)|^4)\Big)\Big)^{\ff 1 4}.$$
Observe that the metric   $\rr$  is equivalent to the metric below
$$\rr_0(u,v):=\|u-v\|_{\D_T^0}:=  \Big(\E\Big(\sup_{t\in [0,T]}  |u(t)-v(t)|^4) \Big)^{\ff 1 4}.$$
By {\bf (B1)}-{\bf (B3)}, it is easy to see that
\begin{equation}\label{p1}
\GG(u)(t):=\e^{tA}\xi(0)+\int_0^t\e^{(t-s)A}b(u_s)\d
s+\int_0^t\e^{(t-s)A}\si(u_s)\d W(s),~~ t\ge 0,~~u\in\D_T
\end{equation} gives rise to a map from $\D_T$ to $\D_T$. Then, by virtue of the fixed point theorem,
it remains to find a constant $T_0>0$ independent of
 $\xi$ such that, for any $T\le T_0$, the map
$\GG$ is contractive in $\D_T$ sine the existence and uniqueness of
mild solution on the intervals $[T_0,2T_0], [2T_0,3T_0], \cdots$ can
be done inductively. Below we provide a brief proof for this.

For any $u,v\in\D_T$, by \eqref{p1} we have
$$\d \{\GG(u)(t)-\GG(v)(t)\} =\big\{A\big(\GG(u)(t)-\GG(v)(t)\big) + b(u_t)-b(v_t)\big\}\d t +\{\si(u_t)-\si(v_t)\}\d W(t).$$
According to  {\bf (B1)}-{\bf (B3)}, we may apply It\^o's formula to
$|\GG(u)(t)-\GG(v)(t)|^2$ to derive that there exists $c_1>0$ such
that
\beg{align*}\d |\GG(u)(t)-\GG(v)(t)|^2&=   2\big\<  \GG(u)(t)-\GG(v)(t), A\big(\GG(u)(t)-\GG(v)(t)\big)+b(u_t)-b(v_t)\big\>\d t\\
& \quad +\|\si(u_t)-\si(v_t)\|_{HS}^2 \d t +\d M(t)\\
&\le\ff{1}{2\ss{3}T} |\GG(u)(t)-\GG(v)(t)|^2\d t+c_1(1+
T)\|u_t-v_t\|_r^2\d t +\d M(t),\end{align*} where we have used the
negative definite property of $A$ due to {\bf (B1)} in the last step
and set
$$M(t):= 2 \int_0^t \big\< \GG(u)(s)-\GG(v)(s), \{\si(u_s)-\si(v_s)\}\d W(s)\big\>$$
is a martingale. By the BDG inequality, there exists a constant $c_2>0$ such that
\begin{equation}\label{s2}
\begin{split}  &\|\GG(u)-\GG(v)\|_{\D_T^0}^4 =\E\Big(\sup_{t\in [0,T]} |\GG(u)(t)-\GG(v)(t)|^4\Big) \\
&\le\ff{1}{4}\|\GG(u)-\GG(v)\|_{\D_T^0}^4 + 3c_1^2\,(1+ T)^2T^2
\|u-v\|_{\D_T}^4+3\,
\E\Big(\sup_{t\in [0,T]}M(t)^2\Big)\\
&\le \ff{1}{4}\|\GG(u)-\GG(v)\|_{\D_T^0}^4 + 3c_1^2(1+ T)^2T^2
\|u-v\|_{\D_T}^4+ c_2\, \E \<M (T)\>.
\end{split}
\end{equation} Note that the
definition of $M(t)$ and the assumption   {\bf(B2)} imply
\beg{align*}  \E \<M (T)\> &\le 4L_0 \int_0^T \E (\GG(u)(t)-\GG(v)(t)|^2\|u_t-v_t\|_r^2)\d t\\
&\le 4L_0 T   \|\GG(u)-\GG(v)\|_{\D_T^0}^2\|u-v\|_{\D_T}^2\\
&\le \ff 1 {4c_2} \|\GG(u)-\GG(v)\|_{\D_T^0}^4 + 16 c_2 L_0^2 T^2
\|u-v\|_{\D_T}^4.\end{align*} Putting this into \eqref{s2} gives
that
\begin{equation*}
\|\GG(u)-\GG(v)\|_{\D_T^0}^4\le2(3c_1^2(1+T^2)+16c_2^2L_0^2)T^2\|u-v\|_{\D_T}^4
\end{equation*}
so that
$$\|\GG(u)-\GG(v)\|_{\D_T}^4 \le 2(3c_1^2(1+T^2)+16c_2^2L_0^2)T^2\e^{4rT}\|u-v\|_{\D_T}^4.$$ Therefore,
by taking $T_0>0$ such that
$2(3c_1^2(1+T^2_0)+16c_2^2L_0^2)T^2_0\e^{4rT_0}<1$, we conclude that
$\GG$ is contractive in $\D_T$ for any  $T\le T_0.$

{\bf (b)} It remains to verify the asymptotic log-Harnack inequality
\eqref{eq6}. To this end, we construct  an asymptotic coupling by
change of
 measures as follows. Let $(X(t),X_t)= (X^\xi(t), X_t^\xi)$, and  for any $\ll>0$, consider the following  SPDE with $Y_0=\eta$:
\begin{equation}\label{p3}
\d Y(t)=\{AY(t)+b(Y_t)+\ll\si(Y_t)\si^{-1}(X_t)(X(t)-Y(t))\}\d
t+\si(Y_t)\d W(t),~~t>0.
\end{equation}  As shown in {\bf (a)}, assumptions ({\bf B1})-({\bf
B3}) imply that \eqref{p3} has a unique local mild solution
$(Y(t))_{t\ge 0}$. Moreover, since  the drift $\tt
b(\zeta):=b(\zeta)+\ll\si(\zeta)\si^{-1}(\tt\zeta)(\tt\zeta(0)-\zeta(0))$
for any $\zeta\in\C_r$ and fixed $\tt\zeta\in\C_r$ is of linear
growth due to ({\bf B2}) and ({\bf B3}), we indeed deduce that the
unique local mild solution is the global one. Let $(Y_t)_{t\ge0}$ be
the associated segment process. For any $t\ge0$ and $\ll>r$, set
\begin{equation*}
h(t):=\ll\,\si^{-1}(X_t)(X(t)-Y(t)),\ \ \  \tt W(t):=W(t)+\int_0^th(s)\d s.
\end{equation*}
Define
\begin{equation*}
R(t)=\exp\bigg(-\int_0^t\<h(s),\d W(s)\>-\ff{1}{2}\int_0^t|h(s)|^2\d
s\bigg).
\end{equation*} As explained in the proof of Theorem \ref{th3},   we only need to verify \eqref{UNF} and \eqref{b3} for the present framework. By a standard finite-dimensional approximation argument
(see for instance \cite[Theorem 4.1.3]{Wbook}), these can be easily deduced from    assumptions   ({\bf B1})-({\bf B3}). We therefore skip the details to save space.  \end{proof}

\section{Stochastic Hamiltonian systems of infinite memory}

In this section, we establish the asymptotic log-Harnack inequality
for a class of degenerate SDEs of infinite memory. More precisely,
we consider the following  stochastic Hamiltonian system of infinite
memory on $\R^{2d}:=\R^d\times\R^d$
\begin{equation}\label{d1}
\begin{cases}
\d X(t)=\ll\, Y(t)\d t\\
\d Y(t)=b(X_t,Y_t)\d t+\si(X_t,Y_t)\d W(t)
\end{cases}
\end{equation}
with the initial value $(X_0,Y_0)=(\xi,\eta)\in\C_r\times\C_r,$
where $\ll>0,$ $b:\C_r\times\C_r\rightarrow\R^d$,
$\si:\C_r\times\C_r\rightarrow\R^d\otimes\R^d$, and $(W(t))_{t\ge0}$
is a  $d$-dimensional Brownian motion defined on the  probability
space $(\OO,\F,(\F_t)_{t\ge0},\P)$. When the memory is finite or
empty, this model has been intensively investigated, see for
instance
 \cite{BWYb,MSH,W14,WZ,WZ1,Zhang} for results on derivative formulas, Harnack inequalities, hypercontractivity, ergodicity,   well-posedness, and so forth.

  To investigate the present setup with infinite memory, we make the following assumptions.

\begin{enumerate}
\item[({\bf C1})] There exist constants  $\bb,L_1>0$ such that  for any $(\xi,\eta), (\bar \xi,\bar
\eta)\in\C_r\times\C_r,$
\begin{equation*}
\<\bb(\xi(0)-\bar \xi(0))+(\eta(0)-\bar \eta(0)),b(\xi,\eta)-b(\bar
\xi,\bar \eta)\> \le L_1(\|\xi-\bar \xi\|^2_r+\|\eta-\bar
\eta\|^2_r);
\end{equation*}
\item[({\bf C2})]There exists an $L_2>0$ such that  for any $(\xi,\eta), (\bar \xi,\bar
\eta)\in\C_r\times\C_r,$
\begin{equation*}
\|\si(\xi,\eta)-\si(\bar\xi,\bar\eta)\|_{HS}^2\le L_2(\|\xi-\bar
\xi\|^2_r+\|\eta-\bar \eta\|^2_r);
\end{equation*}

\item[({\bf C3})]  $\|\si\|_\8:=\sup\limits_{\xi,\eta\in\C_r}\|\si(\xi,\eta)\|<\8$, and  $\si$ is invertible
with $\sup\limits_{\xi,\eta\in\C_r}\|\si^{-1}(\xi,\eta)\|<\8$.
\end{enumerate}

Under   assumptions ({\bf C1}) and ({\bf C2}), \eqref{d1} admits
from Theorem \ref{existence} in the Appendix \ref{app} a unique
strong solution $(X^\xi(t),Y^\eta(t))_{t\ge0}$ with the
corresponding segment process $(X_t^\xi,Y_t^\eta)_{t\ge0}$, which is
a homogeneous Markov process. Let $P_t$ be the semigroup generated
by $(X_t^\xi,Y_t^\eta)$, i.e., $P_tf(\xi,\eta)=\E
f(X_t^\xi,Y_t^\eta)$, $f\in\B_b(\C_r\times\C_r)$.

For $p>2$ and $ \ff{1}{p}<\aa<\ff{1}{2}$, let
\begin{equation*}
\Lambda_{p,\aa}=\bigg(\ff{p^{1+p}}{2(p-1)^{p-1}}\bigg)^{p/2}\bigg(\ff{\Gamma(1-2\aa)}{2^{1-2\aa}}\bigg)^{p/2}\bigg(1-\ff{1}{p}\bigg)^{p\aa-1}
\Gamma\Big(\ff{p\aa-1}{p-1}\Big)^{p-1},
\end{equation*}
  where $\Gamma(\cdot)$ is the Gamma function, 
  it is easy to show that 
  the function $(p,\aa)\mapsto\Lambda_{p,\aa}$ achieves its
infimum at some point $(p_0,\aa_0)$, i.e. 
\begin{equation*}
\Lambda_{p_0,\aa_0}=\inf_{p>2,\ff{1}{p}<\aa<\ff{1}{2}}\Lambda_{p,\aa},~~~p_0>2,~~~\ff{1}{p_0}<\aa_0<\ff{1}{2}.
\end{equation*}
 Moreover, we denote
\begin{equation}\label{s8}
\mu_{p_0}=2^{3p_0-1}\Big\{(L_1+L_2/2)^{p_0}(1-1/p_0)^{p_0-1}+\Lambda_{p_0,\aa_0}L_2^{p_0/2}\Big\}.
\end{equation}

\begin{thm}\label{dege}
{\rm Assume {\bf(C1)-(C3)}.  If
\begin{equation}\label{s7}
\ll>r+\ff{1+\bb+2\bb^2}{2\bb}\Big(\ff{\mu_{p_0}}{2p_0r}\Big)^{\ff{2}{p_0-2}},
\end{equation}
then there exist $r_0\in(0,r)$ and a constant $c>0$  such that
\begin{equation}\label{p2}
\begin{split}
P_t\log f(\xi,\eta)  &\le\log P_t
f(\xi',\eta')+c\,(\|\xi-\xi'\|_r^2+\|\eta-\eta'\|_r^2)\\
&\quad+c\, \e^{-r_0 t}\|\nn\log
f\|_\8(\|\xi-\xi'\|_r+\|\eta-\eta'\|_r)
\end{split}
\end{equation}
holds for $(\xi,\eta),(\xi',\eta')\in\C_r\times\C_r$ and $f\in
\B_b^+(\C_r\times\C_r)$ with $\|\nn\log f\|_\8<\8$. Consequently,
all assertions in Theorem \ref{T2.1} hold true. }
\end{thm}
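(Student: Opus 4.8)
The plan is to run the coupling-by-change-of-measure scheme of Theorem \ref{th1} and Theorem \ref{th3}, now accommodating the degeneracy of \eqref{d1}: since the noise acts only through $Y$, the coupling feedback must be inserted into the $Y$-equation, and it must be tuned to the dissipativity direction of ({\bf C1}), i.e.\ to (a multiple of) the combination $\hat Z(t):=\bb Z_1(t)+Z_2(t)$, where $Z_1(t):=X(t)-\bar X(t)$ and $Z_2(t):=Y(t)-\bar Y(t)$. Concretely, I would run the original system $(X(t),Y(t))$ from $(\xi',\eta')$ under $\P$ and couple it with $(\bar X(t),\bar Y(t))$ started from $(\xi,\eta)$, governed by $\d\bar X(t)=\ll\bar Y(t)\d t$ together with $\d\bar Y(t)=\{b(\bar X_t,\bar Y_t)+\si(\bar X_t,\bar Y_t)\si^{-1}(X_t,Y_t)g(t)\}\d t+\si(\bar X_t,\bar Y_t)\d W(t)$, where $g(t)=\bb\ll Z_2(t)+\ll'\hat Z(t)$ for a suitable $\ll'>0$. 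Setting $h(t)=\si^{-1}(X_t,Y_t)g(t)$, $\tt W(t)=W(t)+\int_0^t h(s)\d s$ and $R(t)$ as in \eqref{eq333}, the entropy bound \eqref{UNF} — proved as Lemma \ref{Gir}, from moment estimates of $Z_1,Z_2$, hence of $g$, under the localized measures, using ({\bf C1})--({\bf C3}) — together with Girsanov's theorem supply a probability $\Q$ under which, by weak uniqueness for \eqref{d1}, $P_tf(\xi,\eta)=\E_\Q f(\bar X_t,\bar Y_t)$, while $\dot Z_1(t)=\ll Z_2(t)$ and $\d\hat Z(t)=\{b(X_t,Y_t)-b(\bar X_t,\bar Y_t)-\ll'\hat Z(t)\}\d t+\{\si(X_t,Y_t)-\si(\bar X_t,\bar Y_t)\}\d\tt W(t)$.

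The core estimate is the analogue of Lemma \ref{lem}: for some $r_0\in(0,r)$ and $c>0$, $\E_\Q(\|Z_{1,t}\|_r^p+\|Z_{2,t}\|_r^p)\le c\,\e^{-pr_0t}(\|\xi-\xi'\|_r^p+\|\eta-\eta'\|_r^p)$ for $t\ge0$ and $p>2$. To prove it I would apply It\^o's formula to $|\hat Z(t)|^2$, bound $2\<\hat Z(t),b(X_t,Y_t)-b(\bar X_t,\bar Y_t)\>$ by ({\bf C1}) and $\|\si(X_t,Y_t)-\si(\bar X_t,\bar Y_t)\|_{HS}^2$ by ({\bf C2}) to get $\d|\hat Z(t)|^2\le\{-2\ll'|\hat Z(t)|^2+(2L_1+L_2)(\|Z_{1,t}\|_r^2+\|Z_{2,t}\|_r^2)\}\d t+\d M(t)$, and handle the first component through $\ff{\d}{\d t}|Z_1(t)|^2=2\ll\<Z_1(t),\hat Z(t)\>-2\ll\bb|Z_1(t)|^2$ (note $\|Z_{2,t}\|_r\le\|\hat Z_t\|_r+\bb\|Z_{1,t}\|_r$, so everything reduces to $\|Z_{1,t}\|_r$ and $\|\hat Z_t\|_r$). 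Weighting by $\e^{pr\cdot}$ and reducing, via the monotonicity argument of \eqref{a8}--\eqref{b1}, to controlling $\sup_{s\le t}(\e^{2rs}|Z_1(s)|^2)$ and $\sup_{s\le t}(\e^{2rs}|\hat Z(s)|^2)$, I would estimate the drift contribution by the H\"older bound \eqref{y2} and the martingale contribution by the sharp maximal inequality for exponentially weighted stochastic integrals (\cite[Lemma 2.2]{E09}); this last input is exactly where the constants $\Lambda_{p,\aa}$ and $\mu_{p_0}$ of \eqref{s8} enter, after optimizing over the admissible pairs $(p,\aa)$ with minimizer $(p_0,\aa_0)$. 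Gronwall's lemma then yields geometric decay, and condition \eqref{s7} is precisely the quantitative requirement ensuring that the resulting Gronwall exponent is $<-pr_0$ for some $r_0\in(0,r)$; here the role of $\ll$ being large is that the first-component discrepancy $Z_1$ is driven back to zero only through the term $\ll Y$ in \eqref{d1}, so its decay is throttled by $\ll$. The decay for all $p\in(0,p_0]$ then follows by Jensen's inequality.

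With this in hand, \eqref{p2} follows as \eqref{eq6} does in the proof of Theorem \ref{th1}: write $P_t\log f(\xi,\eta)=\E_\Q\log f(\bar X_t,\bar Y_t)=\E_\Q\log f(X_t,Y_t)+\E_\Q[\log f(\bar X_t,\bar Y_t)-\log f(X_t,Y_t)]$, bound the first term by $\E(R(t)\log R(t))+\log P_tf(\xi',\eta')$ via the Young/entropy inequality (\cite[Lemma 2.4]{ATW}), the second by $\|\nn\log f\|_\8\E_\Q(\|Z_{1,t}\|_r+\|Z_{2,t}\|_r)\le c\,\e^{-r_0t}\|\nn\log f\|_\8(\|\xi-\xi'\|_r+\|\eta-\eta'\|_r)$, and estimate $\E(R(t)\log R(t))=\ff12\E_\Q\int_0^t|h(s)|^2\d s\le c\int_0^\8\e^{-2r_0s}\d s\,(\|\xi-\xi'\|_r^2+\|\eta-\eta'\|_r^2)=c\,(\|\xi-\xi'\|_r^2+\|\eta-\eta'\|_r^2)$, using ({\bf C3}) and the decay once more. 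The last assertion is then Theorem \ref{T2.1} applied on $E=\C_r\times\C_r$ with $\rr((\xi,\eta),(\xi',\eta'))=\|\xi-\xi'\|_r+\|\eta-\eta'\|_r$, $\LL=c$, $\GG_t=c\,\e^{-r_0t}$ and $\Phi((\xi,\eta),(\xi',\eta'))=c\,(\|\xi-\xi'\|_r^2+\|\eta-\eta'\|_r^2)$. The main obstacle is the middle paragraph: closing the weighted $L^p$-Gronwall estimate with the \emph{sharp} constant $\mu_{p_0}$ in the presence of both the highly degenerate noise (forcing a two-component treatment aligned with the direction of ({\bf C1})) and the infinite memory (the supremum-over-history norm $\|\cdot\|_r$), so that the explicit threshold \eqref{s7}, rather than merely ``$\ll$ sufficiently large'', suffices.
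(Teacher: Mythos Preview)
Your overall strategy --- coupling by change of measure, Girsanov and the entropy bound \eqref{UNF}, an exponential $L^p$-decay estimate for the difference process under $\Q$, then the Young/entropy splitting to reach \eqref{p2} --- is exactly the paper's. The difference is in the middle step, and it is a genuine one.

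The paper does \emph{not} work with your two-component pair $(Z_1,\hat Z)$. It takes the feedback $g(t)=\ll\,(X(t)-\bar X(t))+2\ll\bb\,(Y(t)-\bar Y(t))$ (not your $g=\bb\ll Z_2+\ll'\hat Z$) and applies It\^o's formula to the single quadratic Lyapunov function
\[
V(x,y)=\Big(\tfrac12+\bb^2\Big)|x|^2+\tfrac12|y|^2+\bb\<x,y\>\qquad\Big(=\tfrac12|x|^2+\tfrac12|\bb x+y|^2\ \text{in your variables}\Big),
\]
which satisfies $\tfrac14|Z|^2\le V(Z)\le c_\bb|Z|^2$ with $c_\bb=(1+\bb+2\bb^2)/2$. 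One then gets, under $\Q$,
\[
\d V(Z(t))\le\big\{-\ll\bb\,|Z(t)|^2+(L_1+L_2/2)\|Z_t\|_r^2\big\}\d t+\d M(t),
\]
sets $\ll'=\ll\bb/(2c_\bb)$, $\kk=2(\ll'-r)$, and runs the same \eqref{a8}--\eqref{b1} reduction plus H\"older on the drift and \cite[Lemma 2.2]{E09} (with the constants $\Lambda_{p_0,\aa_0}$) on the martingale. The factor $4$ from $V\ge\tfrac14|Z|^2$, the drift constant $L_1+L_2/2$, and the martingale constant $\Lambda_{p_0,\aa_0}L_2^{p_0/2}$ combine into the $\mu_{p_0}$ of \eqref{s8}; the Gronwall condition then reads $\kk^{p_0/2-1}>\mu_{p_0}/(2p_0 r)$, which unwound through $\kk=2(\ll\bb/(2c_\bb)-r)$ is \eqref{s7}.

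Your decomposition is cleaner in one respect --- the $\hat Z$-equation carries only the drift $-\ll'\hat Z$ from the feedback, and the $Z_1$-equation has no noise --- and it will certainly deliver \emph{an} asymptotic log-Harnack inequality for large $\ll$. But the explicit constants $\mu_{p_0}$ and the threshold \eqref{s7} are tied to the paper's particular feedback and to the single-$V$ estimate; your two coupled weighted Gronwall inequalities (for $\|Z_{1,t}\|_r$ and $\|\hat Z_t\|_r$) will produce a different explicit bound. So the ``main obstacle'' you flag is real: to land on \eqref{s7} with the stated $\mu_{p_0}$ you would have to switch to the paper's coupling and the $V$-function, rather than your $(Z_1,\hat Z)$ route.
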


\begin{proof}
Again, we adopt the asymptotic coupling by change of measures. Let
$(X(t),Y(t))$ solve \eqref{d1} for $(X_0,Y_0)=(\xi,\eta).$  For
  $\ll> 0$ in \eqref{d1} and   $\beta>0$ in ({\bf C1}),
consider the following stochastic Hamiltonian system
\begin{equation}\label{d2}
\begin{cases}
\d \bar X(t)=\ll\,\bar Y(t)\d t\\
\d  \bar Y(t)=\Big\{b(\bar X_t,\bar Y_t)+\si(\bar X_t,
\bar Y_t)\si^{-1}(X_t,Y_t)\Big(\ll(X(t)- \bar X(t))\\
~~~~~~~~~~~~~~~+2\ll\bb(Y(t)-\bar Y(t))\Big)\Big\}\d t+\si(\bar
X_t,\bar Y_t)\d W(t)
\end{cases}
\end{equation}
with the initial value $(\bar X_0, \bar Y_0)=(\bar
\xi,\bar\eta)\in\C_r\times\C_r$. Under ({\bf C1})-({\bf C3}),
according to Theorem \ref{app}, \eqref{d2} has a unique strong
solution $(\bar X(t),\bar Y(t))_{t\ge0}$  with the associated
segment process $(\bar X_t,\bar Y_t)_{t\ge0}$. For any $t\ge0$, let
 \begin{equation*}
h(t)=\si^{-1}(X_t,Y_t)\Big(\ll(X(t)-\bar X(t))+2\ll\,\bb\,(Y(t)-\bar
Y(t))\Big),\ \ \tt W(t)= W(t)+\int_0^th(s)\d s.
\end{equation*}
Define
\begin{equation*}
R(t)=\exp\bigg(-\int_0^t\<h(s),\d W(s)\>-\ff{1}{2}\int_0^t|h(s)|^2\d
s\bigg).
\end{equation*}
As shown in the proof of Theorem \ref{th1},  it suffices to prove
\eqref{UNF} and Lemma \ref{lem} for the present coupling
$((X_t,Y_t),(\bar X_t,\bar Y_t))$. For simplicity, we only prove the
latter one. It is easy to see that  for any $x,y\in\R^d$,
\beq\label{*SV}\ff{1}{4}\,(|x|^2+|y|^2)\le V(x,y):=
(1/2+\bb^2)|x|^2+|y|^2/2+\bb\<x,y\>\le c_\bb(|x|^2+|y|^2),
\end{equation}
where $c_\bb:=(1+\bb+2\bb^2)/2$.
 Set $Z(t):=(X(t)-\bar X(t),
Y(t)-\bar Y(t)),\  t\in\R.$ 
Since \eqref{d1} and \eqref{d2} reduce to 
\begin{equation*}
\begin{cases}
\d X(t)=\ll\, Y(t)\d t\\
\d Y(t)=\{b(X_t,Y_t)-\ll (X(t)-\bar X(t))-2\ll \beta (Y(t)-\bar Y(t))\}\d t+\si(X_t,Y_t)\d \tilde W(t)
\end{cases}
\end{equation*}
and 
\begin{equation*}
\begin{cases}
\d \bar X(t)=\ll\, \bar Y(t)\d t\\
\d \bar Y(t)=b(\bar X_t, \bar Y_t)\d t+\si(\bar  X_t,\bar  Y_t)\d \tilde W(t),
\end{cases}
\end{equation*}
by It\^o's formula we obtain
\begin{equation}\label{d3}
\begin{split}
&\d V(Z(t))
=\Big\{\big\<(1+2\bb^2)(X(t)-\bar X(t))+\bb(Y(t)-\bar  Y(t)), \ll(Y(t)-\bar Y(t))\big\>\\
&\quad+\big\<\bb(X(t)-\bar X(t))  +Y(t)-\bar Y(t),-\ll(X(t)-\bar X(t))-2\ll\bb(Y(t)-\bar Y(t))\big\>\\
&\quad+\big\<\bb(X(t)-\bar X(t))  +Y(t)-\bar Y(t),b(X_t,Y_t)-b(\bar X_t,\bar Y_t)\big\>\\
&\quad+\ff{1}{2}\|\si(X_t,Y_t)-\si(\bar X_t,\bar Y_t)\|_{HS}^2\Big\}\d t  + \d M(t)\\
&=: I(t)\d t+ \d M(t),\ \ t\ge 0,
\end{split}
\end{equation}
where
\begin{equation*}
\d M(t):=\big\<Y(t)-\bar Y(t)+\bb(X(t)-\bar
X(t)),(\si(X_t,Y_t)-\si(\bar X_t,\bar Y_t))\d \tt W(t)\big\>,
\end{equation*}
and by ({\bf C1}) and ({\bf C2}),
\begin{equation*}
I(t) \le-\ll \bb|Z(t)|^2 +(L_1+L_2/2) \|Z_t\|_r^2,\ \ t\ge 0.
\end{equation*}
Whence,  it follows from \eqref{d3}   that
\begin{equation}\label{s3}
\d V(Z(t)) \le \big\{-\ll \bb |Z(t)|^2  +(L_1+L_2/2)
\|Z_t\|_r^2\big\}\d t+ \d M(t),\ \ t\ge 0.
\end{equation}
Letting  $\ll'=\ff{\ll\,\bb}{2\,c_\bb}$ such that
$2c_\bb\ll'-\ll\bb=0$,
 and combining this with \eqref{*SV},
we obtain \beg{align*} \d (\e^{2\ll' t} V(Z(t)))
 &= \e^{2\ll' t}\{2\ll'  V(Z(t))\d t+ \d  V(Z(t))\}\\
 &\le \e^{2\ll' t}\{(2c_\bb \ll' - \ll\bb) |Z(t)|^2 +(L_1+L_2/2)\|Z_t\|_r^2\}  \d t +\e^{2\ll' t} \d M(t)\\
 &=(L_1+L_2/2)\e^{2\ll' t} \|Z_t\|_r^2  \d t +\e^{2\ll' t} \d M(t),\ \ t\ge 0.
 \end{align*}
Setting $\kk=2(\ll'-r)$ and using \eqref{*SV} again,  we derive that
 \begin{equation}\label{s4}
 \begin{split}
 \e^{2rt}  |Z(t)|^2   &\le 4\e^{-\kk t} V(Z(0)) +4 (L_1+L_2/2) \int_0^t \e^{-\kk(t-s) }  \e^{2rs}\|Z_s\|_r^2 \d
 s\\
 &\quad+4\e^{-\kk t}\int_0^t \e^{2\ll's}\d M(s).
 \end{split}
 \end{equation}
For any $k>\|\xi\|_r+\|\eta\|_r+\|\bar\xi\|_r+\|\bar\eta\|_r$,
define the stopping time
\begin{equation*}
\tau_k=\inf\{t\ge0:\|X_t\|_r+\|Y_t\|_r+\|\bar X_t\|_r+\|\bar
Y_t\|_r\ge t\}.
\end{equation*}
By the H\"older inequality, one has
\begin{equation}\label{s5}
\Big(\int_0^{t\wedge\tau_k}\e^{-\kk(t\wedge\tau_k-
s)}\e^{2rs}\|Z_s\|_r^2\d s\Big)^{p_0}
\le\ff{(1-1/p_0)^{p_0-1}}{\kk^{p_0-1}}\int_0^{t\wedge\tau_k}\e^{2p_0rs}\|Z_s\|_r^{2p_0}\d
s.
\end{equation}
Moreover, employing \cite[Lemma 2.2]{E09}  leads to
\begin{equation}\label{s6}
\begin{split}
&\E_\Q\bigg(\sup_{0\le s\le t\wedge\tau_k }\Big(\e^{-\kk s} \int_0^s
\e^{2\ll'u}\d
M(u)\Big)^{p_0}\bigg)\\
&\le\ff{\Lambda_{p_0,\aa_0}L_0^{p_0/2}}{\kk^{p_0/2-1}}\int_0^t\E_\Q(\e^{2p_0r(s\wedge\tau_k))}\|Z_{s\wedge\tau_k}\|_r^{2p_0})\d
s,
\end{split}
\end{equation}
where  the explicit expression of $\Lambda_{p_0,\aa_0}$ was provided
in the last line of the argument of \cite[Lemma 2.2]{E09}. Thus,
taking \eqref{s4}, \eqref{s5}, and \eqref{s6} into account and
employing Fatou's lemma yields
\begin{equation}
\E(\e^{2p_0rt}\|Z_t\|_r^{2p_0})\le
c_{p_0,\vv}\|Z_0\|_r^{2p_0}+(1+\vv)\ff{\mu_{p_0}}{\kk^{p_0/2-1}}\int_0^t\E_\Q(\e^{2p_0rs}\|Z_s\|_r^{2p_0})\d
s,~~\vv>0,
\end{equation}
for some constant $c_{p_0,\vv}>0,$ where $\mu_{p_0}$ was introduced
in \eqref{s8}. Consequently, the desired assertion follows by taking
$\vv>0$ sufficiently small, applying   Gronwall's inequality and
utilizing \eqref{s7}.

\end{proof}

\appendix

\section{Appendix}\label{app}
To make the content self-contained, in this section, we  address
existence and uniqueness of solutions to \eqref{eq0} under the
locally weak monotonicity and the weak coercivity. Assume that
\begin{itemize} \item[({\bf D1})]  $b\in C(\C_r)$ and $\si\in C(\C_r)$ are   bounded on bounded subsets of $\C_r$, and, for each $k\ge1$, there is
an $L_k>0$ such that for all $\xi,\eta\in\C_r$ with
$\|\xi\|_r\vee\|\eta\|_r\le k$,
\begin{equation*}
2\<\xi(0)-\eta(0), b(\xi)-b(\eta)\>+\|\si(\xi)-\si(\eta)\|_{\rm
HS}^2\le L_k\|\xi-\eta\|_r^2.
\end{equation*}
\item[({\bf D2})]  There exists an  $L>0$ such that
$ 2\<\xi(0),b(\xi)\>^++\|\si(\xi)\|_{\rm HS}^2\le L(1+\|\xi\|_r^2),
~\xi\in\C_r. $
\end{itemize}

\begin{thm}\label{existence}
{\rm Let
  $({\bf D1})$ and $({\bf D2})$ hold. Then,
  $\eqref{eq0} $ has a unique solution $(X(t))_{t\ge0}$ such that  for
  some $C>0$,
 \begin{equation}\label{e9}
 \E\|X_t\|_r^2\le C \e^{C\,t}(1+ \|\xi\|_r^2),\ \ t\ge 0,
 ~~\xi\in\C_r.
 \end{equation}
}
\end{thm}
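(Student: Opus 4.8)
The plan is to establish existence and uniqueness of solutions to \eqref{eq0} under the locally weak monotonicity $({\bf D1})$ and the weak coercivity $({\bf D2})$ by a truncation-plus-approximation scheme, following the standard route for monotone SDEs but adapted to the infinite-memory phase space $\C_r$. First I would truncate: for each $k\ge 1$ pick a smooth cutoff $\phi_k:\C_r\to[0,1]$ with $\phi_k(\xi)=1$ for $\|\xi\|_r\le k$ and $\phi_k(\xi)=0$ for $\|\xi\|_r\ge k+1$, and set $b_k=\phi_k b$, $\si_k=\phi_k\si$. By $({\bf D1})$ the truncated coefficients are globally one-sided Lipschitz in the sense $2\<\xi(0)-\eta(0),b_k(\xi)-b_k(\eta)\>+\|\si_k(\xi)-\si_k(\eta)\|_{\rm HS}^2\le C_k\|\xi-\eta\|_r^2$ (using boundedness on bounded sets to handle the cross terms where one argument is inside and the other outside the ball), and by $({\bf D2})$ together with boundedness they have at most linear growth. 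For such coefficients on $\C_r$ one gets a unique strong solution $X^{(k)}$ — this can be done either by a Picard/Euler iteration in the space $\D_T=\{u:\ \E\sup_{t\le T}(\e^{rt}|u(t)|^2)<\infty,\ u_0=\xi\}$ exactly as in the proof of Theorem \ref{spde}, or by citing a monotone-coefficient existence theorem; I would write out the contraction estimate via It\^o's formula for $|X^{(k)}(t)-\tilde X^{(k)}(t)|^2$, the relation $\|Z_t\|_r^2\le \e^{-2rt}\|Z_0\|_r^2+\e^{-2rt}\sup_{0\le s\le t}(\e^{2rs}|Z(s)|^2)$ from \eqref{a8}, BDG, and Gronwall on $\e^{2rt}\E\sup|Z|^2$.

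Next I would derive the a priori bound \eqref{e9}. Apply It\^o's formula to $|X^{(k)}(t)|^2$, use $({\bf D2})$ in the form $2\<X^{(k)}(t),b_k(X_t^{(k)})\>+\|\si_k(X_t^{(k)})\|_{\rm HS}^2\le L(1+\|X_t^{(k)}\|_r^2)$, take expectations to kill the martingale, integrate, and then convert $\sup_{0\le s\le t}\e^{2rs}|X^{(k)}(s)|^2$ into a bound on $\e^{2rt}\|X_t^{(k)}\|_r^2$ via \eqref{a8}, arriving at
\begin{equation*}
\E\Big(\sup_{0\le s\le t}\e^{2rs}|X^{(k)}(s)|^2\Big)\le C(1+\|\xi\|_r^2)+C\int_0^t\E\Big(\sup_{0\le u\le s}\e^{2ru}|X^{(k)}(u)|^2\Big)\d s,
\end{equation*}
so Gronwall gives $\E\|X_t^{(k)}\|_r^2\le C\e^{Ct}(1+\|\xi\|_r^2)$ with $C$ independent of $k$. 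This uniform bound is what lets the truncation be removed: define $\tau_k=\inf\{t\ge0:\|X_t^{(k)}\|_r\ge k\}$; on $[0,\tau_k]$ the coefficients $b_k,\si_k$ agree with $b,\si$, so by pathwise uniqueness for the truncated equations $X^{(k)}$ and $X^{(k+1)}$ coincide on $[0,\tau_k]$, whence $\tau_k$ is nondecreasing and one may define $X$ consistently on $[0,\tau_\infty)$ with $\tau_\infty=\lim_k\tau_k$. The uniform moment bound forces $\P(\tau_\infty<T)\le\P(\sup_{t\le\tau_k\wedge T}\|X_t^{(k)}\|_r\ge k)\le k^{-2}C\e^{CT}(1+\|\xi\|_r^2)\to0$, so $\tau_\infty=\infty$ a.s.\ and $X$ is a global solution; passing the bound to the limit by Fatou gives \eqref{e9}. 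Uniqueness for \eqref{eq0} itself follows because any two solutions agree up to each $\tau_k$ by the local one-sided Lipschitz condition $({\bf D1})$ and the same It\^o/Gronwall argument applied to the difference, localized by the stopping times.

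The main obstacle I expect is bookkeeping the interplay between the phase-space norm $\|\cdot\|_r$ and the pointwise norm $|\cdot|$: $({\bf D1})$ and $({\bf D2})$ are stated with the monotonicity/coercivity tested only at the endpoint $\xi(0)$ while the right-hand sides involve the full history via $\|\xi-\eta\|_r$, so every It\^o estimate produces a term $\int_0^t\e^{2rs}\|X_s\|_r^2\d s$ or $\int_0^t\e^{2rs}\|Z_s\|_r^2\d s$ that must be dominated by $\int_0^t\E\sup_{0\le u\le s}(\e^{2ru}|\cdot(u)|^2)\d s$ using the supremum identity \eqref{a8} together with the crucial monotonicity of $s\mapsto\e^{2rs}\|\cdot_s\|_r^2$; getting the constants to be $k$-independent in the truncated problem, and checking that the cutoff does not destroy the one-sided Lipschitz bound, are the delicate points, but both are routine given the structure already exploited in Sections 3 and 5. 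Everything else — existence for the truncated equation, the martingale vanishing under expectation, Gronwall, Fatou — is standard.
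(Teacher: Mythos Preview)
Your route (spatial truncation $b_k=\phi_k b$, $\si_k=\phi_k\si$, solve the truncated equation, patch via $\tau_k\uparrow\infty$) is genuinely different from the paper's, which follows von Renesse--Scheutzow and builds an Euler-type time discretisation $\hat X_t^n(\theta)=X^n((t+\theta)\wedge t_n)$ directly for the original coefficients, shows the approximants form a Cauchy sequence in probability using a stochastic Gronwall lemma, and passes to the limit. No spatial cutoff appears in the paper.

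The gap in your plan is the sentence ``for such coefficients on $\C_r$ one gets a unique strong solution $X^{(k)}$ --- this can be done either by a Picard/Euler iteration \ldots\ exactly as in the proof of Theorem~\ref{spde}.'' After truncation, $\si_k$ is globally Lipschitz and bounded, but $b_k$ is still only \emph{one-sided} Lipschitz (assumption $({\bf D1})$ gives nothing more, and boundedness on bounded sets does not upgrade this). The fixed-point argument in Theorem~\ref{spde} works because assumption $({\bf B2})$ there makes $b$ genuinely Lipschitz: the contraction estimate for the Picard map $\GG$ uses $|b(u_t)-b(v_t)|\le L_0\|u_t-v_t\|_r$, not a one-sided bound. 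With only $2\<\xi(0)-\eta(0),b_k(\xi)-b_k(\eta)\>\le C_k\|\xi-\eta\|_r^2$ you can prove uniqueness and stability of solutions, but you cannot run a Banach fixed-point argument, because for Picard iterates the inner product that arises is $\<\GG(u)(t)-\GG(v)(t),\, b_k(u_t)-b_k(v_t)\>$, and the first slot is not $u_t(0)-v_t(0)$. So the truncation has not removed the core difficulty: you still need an existence mechanism for merely monotone drift in the infinite-memory phase space $\C_r$, and that is precisely what Theorem~\ref{existence} is supposed to provide. ``Citing a monotone-coefficient existence theorem'' is circular unless you name one that covers infinite delay.

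If you supply the missing ingredient --- e.g.\ an Euler scheme for the truncated equation --- your argument goes through, but then the truncation layer is redundant: that Euler scheme is exactly what the paper runs on the untruncated equation (localised by the stopping times $\tau_R^n$), and the rest of your outline (the $\e^{2rt}$-weighted It\^o estimate, the identity \eqref{a8}, Gronwall, Fatou) matches the paper's endgame. The paper's approach buys you a direct construction with one approximation layer; yours would require two, and the inner one is the hard part you have left open.
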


\begin{proof}
Below we follow the idea of \cite[Theorem 2.3]{VS}.  Set $N_0:=
\{n\in \mathbb N: n\ge \ff{r}{\log 2}\}$  and  $[s]:=\sup\{k\in
\mathbb Z: k\le s\}$, the integer par of $s>0$. For any $n\in N_0$,
consider an SDE
\begin{equation}\label{ee1}
\d X^n(t)=b(\hat X_t^n)\d t+\si(\hat X_t^n)\d
W(t),~~~t>0,~~~X_0^n=X_0=\xi,
\end{equation}
where,  $ \hat X_t^n(\theta):=X^n((t+\theta)\wedge t_n),
\theta\in(-\8,0]$ and $t_n:= [nt]/n. $
  Define the
 stopping time
\begin{equation}\label{ee2}
\tau^n_R=\inf\Big\{t\ge0:\ |X^n(t)|\ge R\Big\} =\inf\Big\{t\ge 0:\
\|X^n_t\|_r\ge R\Big\},~R> \|\xi\|_r,~n\in N_0.
\end{equation}
Thanks to $n\in N_0$, we have  $\e^{r/n}\le 2$ so that
\begin{equation}\label{ee3}
\|\hat X_t^n\|_r\le  \|X_t^n\|_r\lor |X^n(t_n)|\le \e^{r(t-t_n)}
\|X_t^n\|_r \le 2 \|X_t^n\|_r.
\end{equation}
 Since $b$ is bounded on  bounded subsets of $\C_r$, we
get
\begin{equation}\label{ee4}
|b(X_t^n)|\le C(R):=\sup_{\|\zeta\|_r\le R}|b(\zeta)|<\8,\ \ R\in
(\|\xi\|_r,\infty), \,t\in [0, \tau_R^n].
\end{equation}
Let $Z^{n,m}(t)=X^n(t)-X^m(t)$ and $ p^n_t=X^n_t-\hat X^n_t.$
 By the notion of $\tau^n_R$,
 \eqref{ee3} implies that \beq\label{GPP0} \|p^n_t\|_r\le 3R,\ \ t\le
\tau_R^n.\end{equation} By It\^o's formula and using ({\bf D1}),
\eqref{ee3} and \eqref{ee4}, there are $C,K>0$ such that
\begin{equation*}
\begin{split}
&\d(\e^{2rt}|Z^{n,m}(t)|^2)\le K \Big\{\sup_{0\le s\le t} (\e^{2rs}|Z^{n,m}(s)|^2)+
\e^{2rt}(\|p^n_t\|_r+\|  p^m_t\|_r)\Big\}\d t+\d M^{n,m}(t)
\end{split}
\end{equation*}
for any $t\in[0,\tau_R^n\wedge\tau_R^m]$, where
$
\d M^{n,m}(t):=2\,\e^{2rt}\big\<Z^{n,m}(t),(\si(\hat X_t^n)-\si(\hat
X_t^m))\d W(t)\big\>.
$
By the stochastic Grownwall inequality \cite[Lemma 5.4]{VS}, for any
$T>0,$   $p\in(0,1)$ and $q>\ff{1+p}{1-p}$, there exists a constant
$c_1>0$ such that
\begin{equation}\label{ee5}
\begin{split}
\E\Big(\sup_{0\le t\le
T\wedge\tau^n_R\wedge\tau^m_R}(\e^{2rt}|Z^{n,m}(t)|^2)\Big)^p&\le
c_1\bigg(\int_0^T\E(\|p_t^n\|_r^q{\bf1}_{\{t\le\tau_R^n\}})\d
t\bigg)^{p/q}\\
&\quad +c_1
\bigg(\int_0^T\E(\|p_t^m\|_r^q{\bf1}_{\{t\le\tau_R^m\}})\d
t\bigg)^{p/q}.
    \end{split}
\end{equation}
A straightforward calculation   leads to
\begin{equation}\label{PP1}
\begin{split}
&\|p_t^n\|_r\le \int_{t_n}^t|b(\hat X_s^n)|\d s+\sup_{t_n\le s\le t}
\Big|\int_{t_n}^s\si(\hat X_s^n)\d W(s)\Big|.
\end{split}
\end{equation}
From \eqref{ee4} and by the local  boundedness of $\si$ and   BDG's
inequality, for some $M_R>0$,
\begin{equation}\label{PP2}
\begin{split}
&\lim_{n\to\infty} \E\Big(\int_{t_n}^{t\land \tau_R^n} |b(\hat
X_s^n)|\d s\Big)^q+\E \bigg(\sup_{t_n\le s\le t\land \tau_R^n}
\Big|\int_{t_n}^s\si(\hat X_s^n)\d W(s)\Big|^q
\bigg)\\
&\le\lim_{n\to\infty} \Big(\ff{C(R)}{n^q}+\ff{M_R}{n^{q/2}}\Big)
=0,\ \ t\ge 0.
\end{split}
\end{equation}
 Combining this with  \eqref{PP1},    we make a conclusion  that
\begin{equation}\label{ee6}
\sup_{t\in[0,T]}\lim_{n\to\8}\E(\|p_t^n\|_r^q{\bf1}_{\{t\le\tau_R^n\}})=0,
\end{equation} which, together with   \eqref{ee5}  for $p=\ff 1 2$,
implies that
\begin{equation}\label{ee11}
\lim_{n,m\rightarrow\8}\E \Big\{\sup_{0\le t\le
T\wedge\tau^n_R\wedge\tau^m_R}\|X^n_t-X^m_t\|_r\Big\}=0.
\end{equation}
So, to ensure that $X_\cdot^n$ converges in probability to a
solution of \eqref{eq0},   it remains to prove
\begin{equation}\label{ee9}
\lim_{R\to\8}\limsup_{n\to\8}\P(\tau^n_R\le T)=0.
\end{equation}
  Indeed, \eqref{ee11}
and \eqref{ee9} yield that
\begin{equation*}
\lim_{n,m\rightarrow\8}\P\Big\{\sup_{0\le t\le
T}\|X^n_t-X^m_t\|_r\ge\vv\Big\}=0,  \ \ \vv>0,
\end{equation*}
and thus, due to the completeness of $(\C_r,\|\cdot\|_r)$,   there
exists a continuous adapted process $(X_t)_{t\in [0,T]}$ on $\C_r$
such that
\begin{equation*}\label{GPP1}
\sup_{0\le t\le T}\|X^n_t-X_t\|_r\rightarrow0~~~~\mbox{ in
probability as } n\rightarrow\8.
\end{equation*}
Subsequently, by carrying out a standard argument, we can show that
$(X(t))_{t\in [0,T]}$ is the unique functional solution to
\eqref{eq0} under assumptions ({\bf D1}) and ({\bf D2}). We now
proceed to verify   \eqref{ee9}. By It\^o's formula, besides $({\bf
D2})$,
   there is   a constant   $c_2 >0$ such that
\begin{equation}\label{eq3}
\begin{split}
 \d(\e^{2rt}|X^n(t)|^2)  &\le c_2\e^{2rt}\Big\{ 1+ |X^n(t)|^2+4\|  X^n_t\|_r^2 +\| p^n(t)\|_r\cdot|b(\hat X^n_t)|\Big\}\d t
  +\d M^n(t),
\end{split}
\end{equation}
where $\d M^n(t):=2\, \e^{2rt}\<X^n(t),\si(\hat X^n_t)\d W(t)\>.$
 By combining \eqref{ee4}  and using ({\bf D2}),
BDG's inequality and Gronwall's inequality, there exists a constant
$c_3>0$ such that
 \beq\label{GPP2}\beg{split}\GG^{n,R}(t):&=\E\bigg(\sup_{0\le s\le
 t\wedge\tau^n_R}(\e^{2rs}|X^n(s)|^2)\bigg)\\
 &\le c_3\,\e^{ c_3t}\bigg\{\|\xi\|_r^2+t+
\int_0^t\e^{2rs}\E\Big(|p^n(s)|{\bf 1}_{\{s\le \tau^n_R\}}\Big)\d
s\bigg\},\ \ t\ge 0
\end{split}\end{equation}
holds for some constant $c_3>0$. Next, \eqref{ee6}, \eqref{GPP2} and   Chebyshev's inequality  gives
\beg{align*}&\lim_{R\to\infty}\lim_{n\to \infty} \P(\tau_R^n\le
T)=\lim_{R\to\infty}\lim_{n\to \infty} \P\Big(\tau_R^n\le
T,\sup_{0\le t\le\tau_R^n\land T
}|X^n(t)|\ge\ff{R}{4}\Big) \\
& \le\lim_{R\to\infty}\lim_{n\to \infty}  \P\Big(\sup_{0\le
t\le\tau_R^n\wedge T }|X^n(t)|\ge\ff{R}{4}\Big)
 \le \lim_{R\to\infty}\lim_{n\to \infty} \ff{16\Gamma^{n,R}(T)}{R^2} =0,\end{align*}
where we used the fact that \begin{equation*} \Big\{\tau_R^n\le
T,\sup_{0\le t\le\tau_R^n\land T
}|X^n(t)|<\ff{R}{4}\Big\}=\emptyset,
\end{equation*}
by the definition of $\tau_R^n.$ So,   \eqref{ee9} holds.

In the end, by making use of \eqref{ee6}
  and \eqref{GPP2} and employing Fatou's lemma for
$n\to\infty$, we obtain
$$\E\bigg(\sup_{0\le s\le t\wedge\tau_R}(\e^{2rs}\|X_s\|^2_r)\bigg)
 \le c_4\big(1+\|\xi\|_r^2\big)\e^{c_4t},$$
where $\tau^R$ is defined as in \eqref{ee2} for $X$ replacing $X^n$,
which goes to $\infty$ as $R\to\infty$. Therefore, by approaching
$R\uparrow\infty$, we achieve \eqref{e9}.
\end{proof}



\begin{thebibliography}{17}
{\small

\setlength{\baselineskip}{0.14in}
\parskip=0pt
\bibitem{ATW06}    Arnaudon, M.,  Thalmaier  A.,   Wang,  F.-Y., Harnack inequality and heat kernel estimate on manifolds with curvature unbounded below,
{\it Bull. Sci. Math.} {\bf130} (2006), 223--233.

\bibitem{ATW} Arnaudon, M., Thalmaier, A., Wang, F.-Y., Gradient
estimates and Harnack inequalities on noncompact Riemannian
manifolds, {\it Stoch. Proc. Appl.}, {\bf119} (2009), 3653--3670.



\bibitem{BWY} Bao, J., Wang, F.-Y., Yuan, C., Bismut formulae and applications for functional SPDEs,
{\it Bull. Sci. Math.}, {\bf137} (2013),   509--522.

\bibitem{BWYb} Bao, J., Wang, F.-Y., Yuan, C.,
Derivative formula and Harnack inequality for degenerate functionals
SDEs, {\it Stoch. Dyn.}, {\bf13} (2013), no. 1, 1250013, 22 pp.





\bibitem{BS} Butkovsky, O.,  Scheutzow,  M., Invariant measures for stochastic functional differential
equations,   {\it Electron. J. Probab.}, {\bf22} (2017), Paper No.
98, 23 pp.



\bibitem{DZ} Da Prato, G.,  Zabczyk, J.,  Ergodicity for Infinite-Dimensional
Systems, in: London Mathematical Society, Lecture Note Series, vol.
229, Cambridge University Press, Cambridge, 1996.




\bibitem{EVS}
 Es-Sarhir, A., von Renesse, Max-K., Scheutzow, M., Harnack inequality for functional SDEs with bounded
 memory,
{\it Electron. Commun. Probab.}, {\bf14} (2009), 560--565.


\bibitem{E09} Es-Sarhir, A., Scheutzow, M., van Gaans, O., Invariant measures for stochastic functional differential equations with
superlinear drift term, {\it Differential Integral Equations},
{\bf23} (2010),   189--200.




\bibitem{HMS} Hairer, M., Mattingly, J.~C., Scheutzow, M., Asymptotic coupling
and a general form of Harris' theorem with applications to
stochastic delay equations,
 {\it Probab. Theory Related Fields}, {\bf149} (2011),  223--259.


\bibitem{HM} Hairer, M., Mattingly, J.~C., Ergodicity of the 2D
Navier-Stokes equations with degenerate stochastic forcing, {\it
Ann. Math.}, {\bf 164} (2006), 993--1032.






\bibitem{M08} Mao, X., \emph{Stochastic Differential Equations and Applications},
Horwood, England, Second Edition, 2008.
%
\bibitem{MSH}   Mattingly, J.~C., Stuart, A.~M., Higham, D.~J., Ergodicity for SDEs and approximations: locally Lipschitz vector
fields and degenerate noise,
 {\it Stochastic Process. Appl.}, {\bf101} (2002),   185--232.

\bibitem{SWY} Shao, J., Wang, F.-Y., Yuan, C., Harnack inequalities for stochastic (functional) differential equations with non-Lipschitzian
coefficients, {\it Electron. J. Probab.}, {\bf17} (2012),  18 pp.



\bibitem{VS}
von Renesse, M.-K., Scheutzow, M., Existence and uniqueness of
solutions of stochastic functional differential equations, {\it
Random Oper. Stoch. Equ.}, {\bf18} (2010),  267--284.
\bibitem{W97} Wang, F.-Y.,  Logarithmic Sobolev inequalities on noncompact
Riemannian manifolds, {\it Probab. Theory Related Fields}, {\bf109} (1997),
417--424.

\bibitem{W07} Wang, F.-Y.,  Harnack inequality and applications for stochastic generalized porous media equations, {\it Ann.  Probab.},  {\bf 35} (2007), 1333--1350.

\bibitem{W10} Wang, F.-Y., Harnack inequalities on manifolds with boundary and applications, {\it J. Math. Pures Appl.},  {\bf 94} (2010), 304--321.

\bibitem{W11}  Wang, F.-Y.,  \emph{ Harnack inequality for SDE with multiplicative noise and extension to Neumann semigroup on nonconvex manifolds,} Ann. Probab. 39(2011),  1449--1467.

\bibitem{Wbook} Wang, F.-Y., \emph{Harnack inequalities and
Applications for Stochastic Partial Differential Equations},
Springer, 2013, Berlin.

\bibitem{WY}
 Wang, F.-Y., Yuan, C., Harnack inequalities for functional SDEs with multiplicative noise and
 applications,
  {\it Stochastic Process. Appl.}, {\bf121} (2011),  2692--2710.

\bibitem{W14}   Wang, F.-Y.,  Hypercontractivity and applications for stochastic Hamiltonian
systems, {\it J. Funct. Anal.}, {\bf 272} (2017),  5360--5383.


\bibitem{WZ}Wang, F.-Y., Zhang, X., Degenerate SDE with H\"older-Dini
Drift and Non-Lipschitz Noise Coefficient, {\it SIAM J. Math.
Anal.}, {\bf48} (2016),   2189--2226.


\bibitem{WZ1}Wang, F.-Y., Zhang, X.,  Degenerate SDEs in Hilbert spaces with
rough drifts, {\it Infin. Dimens. Anal. Quantum Probab. Relat.
Top.}, {\bf18} (2015),   1550026, 25 pp.


\bibitem{WZ13} Wang, F.-Y., Zhang, X., Derivative formula and applications for degenerate diffusion
semigroups,
 {\it J. Math. Pures Appl.},  {\bf99} (2013),   726--740.


\bibitem{WYM}Wu, F., Yin, G., Mei, H., Stochastic functional
differential equations with infinite delay: existence and uniqueness
of solutions, solution maps, Markov properties, and ergodicity, {\it
J. Differential Equations}, {\bf262} (2017),   1226--1252.








\bibitem{Xu} Xu, L., A modified log-Harnack inequality and asymptotically strong Feller property, {\it J. Evol. Equ.},
{\bf11} (2011),   925--942.

\bibitem{Zhang}   Zhang, X., Stochastic flows and Bismut formulas for stochastic
Hamiltonian systems, {\it Stoch. Proc. Appl.}, {\bf 120} (2010),
1929--1949.

 }
\end{thebibliography}
\end{document}